\documentclass[11pt,letterpaper]{amsart}
\usepackage{amsmath,amscd}
\usepackage{amsfonts,latexsym,rawfonts,amsmath,amssymb,amsthm}
\usepackage[usenames,dvipsnames]{xcolor}

\numberwithin{equation}{section}
\newtheorem{prop}{Proposition}[section]
\newtheorem{theorem}[prop]{Theorem}
\newtheorem{lemma}[prop]{Lemma}
\newtheorem{corollary}[prop]{Corollary}
\newtheorem{remark}[prop]{Remark}
\newtheorem{example}[prop]{Example}
\newtheorem{definition}[prop]{Definition}

\newtheorem{problem}[prop]{Problem}
\newcommand{\R}{\mathbb{R}}
\newcommand{\Lip}{\operatorname{Lip}}

\begin{document}

\title[New Asymptotic Geometric Quantities]{New Asymptotic Geometric Quantities in Riemannian Geometry and Their Geometric and Dynamical Applications}
\author{Xiaoshang Jin and Jiabin Yin}
\thanks{X. J. is supported by ``the Fundamental Research Funds for the Central Universities'', HUST: \# 2025BRSXB002, NSFC
(Grant No. 12471054) and J. Y. is supported by the NSFC (Grant No. 12201138), Mathematics Tianyuan fund project (Grant No. 12226350) and NSF of Henan Province (Grant No. 262300421869)}
\date{}

\maketitle

\begin{abstract}
We introduce large $p$ asymptotic geometric quantities associated with
$p$-capacity, the first $p$-eigenvalue, and the Maz'ya constant on complete
noncompact Riemannian manifolds.  We prove the hierarchy
$$
        \mathcal{V}(M)\geq \mathcal C(\Omega)\geq \Lambda(M)=\mathcal M(M)\geq0,
$$
and show that, under a centered-ball isoperimetric condition or a
rotational symmetry condition, these quantities coincide with the volume
entropy or the dimension.

 In the Hadamard nonpositively curved
case it also agrees with the topological entropy of the geodesic flow. As an application, combining with the entropy rigidity theorem, we obtain a characterization of hyperbolic manifolds.

We also prove a second-order refinement.  For a Hadamard manifold with compact quotient, under certain condition,
the first-order large $p$ capacitary limit detects volume entropy,
whereas the logarithmic second-order correction detects the rank.

\bigskip
\noindent\textbf{Keywords:} infinity capacity, infinity eigenvalue, Maz'ya limit, volume entropy,  topological entropy

\bigskip
\noindent\textbf{2020  Mathematics Subject Classification:} Primary 31C15, 53C23; Secondary 53C21, 31B15, 37D40

\end{abstract}

\section{Introduction}

\begin{definition}
  Assume that $(M,g)$ is a complete Riemannian manifold and $\Omega\subseteq M$ is a compact subset. For any $p>1,$
the $p-$capacity of $\Omega$ in $M$ is defined as the infimum of the $p-$Dirichlet energy:
\begin{equation*}
\begin{aligned}
\mathrm{Cap}_p(\Omega)=&\mathrm{Cap}_p(\Omega, M)=\inf\left\{\int_{M}|\nabla u|^p \,dv_g:\ u|_{\Omega}=1, \ u\in C^\infty_0(M)\right\}.
\end{aligned}
\end{equation*}
\end{definition}
In general, we require that $M$ is non-compact and $\Omega$ is a closed domain with compact smooth boundary $\partial\Omega.$ Then
\begin{equation*}
\begin{aligned}
\mathrm{Cap}_p(\Omega)=&\inf\Big\{\int_{M\setminus\Omega}|\nabla u|^p\, dv_g:\ u \ \text{is locally Lipschitz in}\\
&  M\setminus\Omega,\ u|_{\partial\Omega}=1, \ \lim\limits_{x\rightarrow\infty}u(x)=0\Big\}.
\end{aligned}
\end{equation*}
The infimum in the definition of $p-$capacity is attained by the unique function
$u$ that satisfies the following Dirichlet problem:
The infimum energy can be achieved by the unique solution $u_p$ to the following PDE:
$$\begin{cases}
  \Delta_p u={\rm div}(|\nabla u|^{p-2}\nabla u)=0\ & {\rm in}\ M\setminus\Omega, \\
  u=1\ &{\rm on}\ \ \partial\Omega,\\
  u(x)\rightarrow0  &{\rm as} \ \ x\rightarrow\infty.
\end{cases}$$
Such a function $u_p$ is called the $p-$capacitary potential of $\Omega$ in $M.$ However, the solution may not exist unless manifold admits a $p-$Green function that vanishes at infinity, see \cite[Theorem 4.1]{fogagnolo2022minimising}. When it exists, the $p-$capacity is given by
$$
\mathrm{Cap}_p(\Omega)=\int_{M\setminus\Omega}|\nabla u_p|^p\, dv_g=\int_{\{u=t\}}|\nabla u_p|^{p-1}\, d\sigma_g
$$
for any regular value $t\in(0,1].$ Here $d\sigma_g$  denotes the ($n-$dimensional) Riemannian surface element induced on the level set.
\par The concept of capacity originates from electrostatics and $\mathrm{Cap}_2(\Omega,\mathbb{R}^3)$ represents the total electrical charge the conductor $\Omega$ can hold. For a general $p>1$, the $p$-capacity is a fundamental notion in nonlinear potential theory and geometric analysis.
In Euclidean space $\mathbb{R}^n$, sharp relationships between capacity, volume, surface area, and mean curvature have been extensively studied. For instance, \cite{adams2025full,kruglikov1987capacity,maz2013sobolev,polya1951isoperimetric,xiao2017p} etc.  More recently, there are some sharp upper and lower bounds for the $p$-capacity in hyperbolic spaces using inverse mean curvature flow, conformal methods, one can see \cite{jin2026sharp,jin2025sharply,li2025sharp}for more details.
\par In general Riemannian manifolds, \cite{grigor1999isoperimetric} provided some basic conceptions and tools of capacity. In 2008, Bray and Miao \cite{bray2008capacity}  established a sharp upper bound for $\mathrm{Cap}_2(\Omega)$
 in an asymptotically flat 3-manifold with nonnegative scalar curvature (other related mass-capacity inequality see \cite{miao2024implications,miao2025mass,oronzio2025area,oronzio2025adm}).  The result links the capacity to the Hawking mass and the total ADM mass and was later extend to general $p\in(1,3)$ case by Xiao in \cite{xiao2016p}. The theory was further developed to mass-$p$-capacity in \cite{mazurowski2023monotone,xia2024new} (other relevant literature
see \cite{yin2026sharp}). For asymptotically hyperbolic Einstein manifolds, the first author studied the capacity of balls in \cite{jin2025relative}.
\par In this paper, we investigate relations between volume entropy of $M$ and the asymptotic behavior of three families of geometric quantities on a complete noncompact Riemannian manifold: the $p$-capacity of a compact set, the first Dirichlet $p$-eigenvalue, and the Maz'ya constant, as $p\to\infty$.

To prepare for the statement of our main results, we begin by introducing several necessary definitions and outlining the motivation for studying these asymptotic geometric quantities.
\\~
\par  In the following, we use $|\cdot|$ to denote the volume of a set with respect to the Hausdorff measure of the corresponding dimension.
\par Let $(M,g)$ be a complete non-compact Riemannian manifold and $\Omega$ be a compact subset of $O,$ where $O$ is a bounded domain in $M.$ Then the infinity capacity is well defined and satisfies the following property (see Section \ref{Appendixsection}):
$$
{\rm Cap}_{\infty}(\Omega,O):=\lim\limits_{p\rightarrow\infty} \Big({\rm Cap}_{p}(\Omega,O)\Big)^{\frac 1p}=\frac 1{d_g(\partial \Omega,\partial O)},
$$
where ${\rm Cap}_{p}(\Omega,O)$ is the relative $p$-capacity of $(\Omega,O)$ defined as
$$
{\rm Cap}_p(\Omega,O)=\inf\left\{\int_{O\setminus\Omega}|\nabla u|^p\,d\mu : u\in \mathrm{Lip}_c(O),\; u\ge 1\text{ on }\Omega\right\}.
$$
The infinity capacity admits a geometric interpretation as the reciprocal of the distance. However, when considering the capacity of $\Omega$ in the case $O=M,$ the above definition becomes meaningless as it yields zero. Therefore, we introduce a new definition of the infinity capacity of $\Omega$ when
$O=M.$
   \begin{definition}
Let $\Omega\subseteq M$ be a bounded domain, then we call
$$
\mathcal{C}(\Omega)=\limsup\limits_{p\rightarrow\infty}p\cdot\Big({\mathrm{Cap}_p(\Omega)}\Big)^{\frac 1p}
$$
the infinity capacity of $\Omega.$
\end{definition}
Here we define the infinity capacity using the limit superior because the limit may not exist. In fact, we can construct an example to show that
$$\limsup\limits_{p\rightarrow\infty}p\cdot\Big({\mathrm{Cap}_p(\Omega)}\Big)^{\frac 1p}>\liminf\limits_{p\rightarrow\infty}p\cdot\Big({\mathrm{Cap}_p(\Omega)}\Big)^{\frac 1p}$$
for $\Omega$ in some manifold $(M,g).$ See Example \ref{ex3.2}.
\par
We also claim that $\mathcal{C}(\Omega)=0$ for all domains in Euclidean space $\mathbb{R}^{n+1}$ as ${\mathrm{Cap}_p(\Omega)}=0$ for all $p\geq n+1$
and that $\mathcal{C}(\Omega)=n$ for all domains in hyperbolic space $\mathbb{H}^{n+1}.$
\\~\\
\par Assume that $\lambda_{1,p}(M)$ is the $p-$eigenvalue of $M$ and defined by
$$
\lambda_{1,p}(M)=\lim\limits_{k\rightarrow\infty}\lambda_{1,p}(\Omega_k)=\inf\left\{\frac{\int_M |\nabla u|^p\, dv_g}{\int_M |u|^p\,dv_g}:\ u\neq 0,\ u\in C^\infty_0(M)\right\}
$$
for any smoothly compact exhaustion $\{\Omega_k\}_{k=1}^\infty$ of $M.$
The first Dirichlet eigenvalue $\lambda_{1,p}(\Omega)$ is defined as
the least positive number $\lambda$ such that the following equation has a non-zero weak solution.
$$
\begin{cases}
  \Delta_p u=-\lambda |u|^{p-2} u\ &\ {\rm in}\ \Omega,
 \\ u=0 & {\rm on}\ \partial\Omega
\end{cases}$$
where  $\Delta_p u={\rm div}(\|\nabla u\|^{p-2}\nabla u)$ is called the $p-$Laplacian.

If $\Omega\subset M=\mathbb R^n$, Juutinen-Lindqvist-Manfredi \cite{juutinen1999eigenvalue} proved that
$$
\lambda_{1,\infty}(\Omega):=\lim_{p\to\infty}\Big(\lambda_{1,p}(\Omega)\Big)^{1/p}=\frac{1}{\max\{ {\rm dist}(x,\partial\Omega):\ x\in\partial\Omega\}}.
$$
If $\Omega=M$ is a complete non-compact Riemannian manifold, we can see that $\lambda_{1,\infty}(M)=0$.  Therefore, we introduce a new definition of the infinity for $\lambda_{1,p}(M).$

  \begin{definition}
Let $(M,g)$ be a complete non-compact Riemannian manifold, then we call
$$
\Lambda(M)=\lim\limits_{p\rightarrow\infty}p\cdot\Big(\lambda_{1,p}(M)\Big)^{\frac 1p}
$$
the infinity eigenvalue of $M.$
\end{definition}
Recall an interesting result from in \cite[Lemma 2.4]{jin2025lower} that
$p\cdot\Big(\lambda_{1,p}(\Omega)\Big)^{\frac 1p}$ and $p\cdot\Big(\lambda_{1,p}(M)\Big)^{\frac 1p}$ are always increasing in $p.$ Hence $\Lambda(M)$ is well defined.
\\~\\
\par Recall the Maz'ya constant $m_p(M)$ is defined as
$$
m_p(M) = \inf_{F\subset\subset M} \frac{{\rm Cap}_p(F,M)}{|F|}
$$
\begin{definition}
The Maz'ya limit of a complete noncompact Riemannian manifold $M$ is defined as
$$
\mathcal{M}(M) := \lim_{p\to\infty} p\, \Big(m_p(M)\Big)^{1/p}.
$$
\end{definition}
Since $\Lambda(M)$ is well defined, it follows from Theorem \ref{thm1.6} (1) that $\mathcal M(M)$ is also well defined.
\\~\\
\par In the end, we recall the definition of the volume entropy for a non-compact manifold:
 \begin{definition}
Let $o\in M$, then we call
$$
\mathcal{V}(M)=\limsup_{R\rightarrow\infty}\frac{\ln |B(o,R)|}{R}
$$
the volume entropy of $M.$
\end{definition}
The definition is independent of the choice of the base point $o.$ In fact, for any different points $o$ and $o'$ in $M$ with distance $d,$
we have $B(o,R)\subseteq B(o',R+d)$ and hence
$$
\mathcal{V}_o(M)=\limsup_{R\rightarrow\infty}\frac{\ln |B(o,R)|}{R}\leq \limsup_{R\rightarrow\infty}\frac{\ln |B(o',R+d)|}{R}=\mathcal{V}_{o'}(M).
$$ The reverse inequality also holds by symmetry.

\par In this paper, we study the property of $\mathcal{C}(\Omega),$ especially its relationship with $\Lambda(M)$, volume entropy $\mathcal V(M)$ and Maz'ya limit $\mathcal M(M)$:

 \begin{theorem}\label{thm1.6}
  Let $(M,g)$ be a complete non-compact Riemannian manifold.
  \begin{itemize}
  \item [(1)] For all compact smooth domains $\Omega$ in $M,$
    \begin{equation}\label{eq1.1}
     \mathcal{V}(M)\geq   \mathcal{C}(\Omega)\geq \Lambda(M)=\mathcal M(M)\geq 0.
    \end{equation}
    \item [(2)] If $(M,g,\Omega,o)$ satisfies one of the following conditions:
    \begin{itemize}
      \item [(A)] $(M,g)$ obeys the isoperimetry of $o$-centered balls for $o\in M,$ i.e.
     $$
     \forall\ compact\ smooth \ K \  with \ |K|=|B(o,r)|\Rightarrow|\partial K|\geq |\partial B(o,r)|,
     $$ and $\Omega$ is a compact smooth domain in $M;$
      \item [(B)] $(M,g)=([0,\infty),\ dt^2+\varphi(t)^2g_{\mathbb{S}})$ is a rotationally symmetric manifold with center $o=\{t=0\}$ and $\Omega$ is a compact smooth domain in $M$ containing $o,$
    \end{itemize}then $\mathcal{C}(\Omega)$ is a constant independent of $\Omega.$
     Moreover, if in addition,
     \begin{equation}\label{eq1.2}
      \frac{|\partial B(o,t)|}{|B(o,t)|}\ \ \text{is decreasing for sufficiently big}\ t,
     \end{equation}
     then \begin{equation}\label{eq1.3}
     \mathcal{V}(M)=   \mathcal{C}(\Omega)=\Lambda(M)=\mathcal M(M).
     \end{equation}
   \end{itemize}
  \end{theorem}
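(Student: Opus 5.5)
For part (1) I would prove the three comparisons separately. For $\mathcal V(M)\ge\mathcal C(\Omega)$, I would bound $\mathrm{Cap}_p(\Omega)$ from above with radial test functions. Put $\Omega\subset B(o,r_0)$, $V(r)=|B(o,r)|$, and $u(x)=\phi(d(o,x))$, where $\phi=1$ on $[0,r_0]$ and $\phi(r)=\int_r^\infty A(s)^{-1/(p-1)}ds\big/\int_{r_0}^\infty A(s)^{-1/(p-1)}ds$. A natural choice is $A(s)=e^{-(\mathcal V+\varepsilon)s}$ (suitably truncated), because it is integrable against $dV$. The coarea formula gives
$$\int|\nabla u|^p\le\int_{r_0}^\infty|\phi'(r)|^p\,dV(r).$$
A cleaner route is the one-parameter family $\phi(r)=e^{-a(r-r_0)}$ with $a>(\mathcal V+\varepsilon)/p$. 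Then $\int|\nabla u|^p\le a^p\int_{r_0}^\infty e^{-ap(r-r_0)}dV(r)$. Integrating by parts, this is finite and bounded by $C\,a^{p}\cdot\frac{ap}{ap-\mathcal V-\varepsilon}$, up to harmless factors. Taking $ap=\mathcal V+2\varepsilon$ gives $\mathrm{Cap}_p^{1/p}\le \frac{\mathcal V+2\varepsilon}{p}\,C^{1/p}(\cdots)^{1/p}$. Hence $\limsup p\,\mathrm{Cap}_p^{1/p}\le\mathcal V+2\varepsilon$. This needs the growth bound $V(r)\le C_\varepsilon e^{(\mathcal V+\varepsilon)r}$, which follows from the definition of $\mathcal V$. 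The case $\mathcal V=\infty$ is trivial.

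For $\mathcal C(\Omega)\ge\Lambda(M)$, I would show $\mathrm{Cap}_p(\Omega)\ge c_\Omega\,\lambda_{1,p}(M)$ up to factors that disappear after taking $p$-th roots. Take the capacitary test function $u$, with $u=1$ on $\Omega$ and $0\le u\le1$, and apply a modified Rayleigh quotient to $w=u^{\theta}$. One bound is $\int|\nabla u|^p\ge \lambda_{1,p}\int u^p\ge\lambda_{1,p}|\Omega|$, which gives the comparison directly, since $|\Omega|^{1/p}\to1$. The identity $\Lambda=\mathcal M$ needs two inequalities. First, $m_p\ge$ a constant times $\lambda_{1,p}$: for a compact $F$ and a test $u$ with $u\ge1$ on $F$, the same estimate gives $\mathrm{Cap}_p(F)\ge\lambda_{1,p}|F|$, so $m_p\ge\lambda_{1,p}$. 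Second, the reverse direction is the Maz'ya-type inequality $\lambda_{1,p}\ge c_p\, m_p$ with $c_p^{1/p}p\to$ something comparable. I would obtain it from the Maz'ya capacitary strong-type inequality $\int_0^\infty \mathrm{Cap}_p(\{|u|\ge t\})\,d(t^p)\le C_p\int|\nabla u|^p$, where $C_p\le p^p/(p-1)^{p-1}$-type constants. This yields $\lambda_{1,p}\ge m_p/C_p$ and $C_p^{1/p}\sim p$. The extra factor $p$ must be tracked carefully, and this is the delicate point. I expect that the paper's normalization makes $(C_p)^{1/p}\to1$, for instance via truncations $\min(\max(u-t,0),\cdot)$ with geometric level sets. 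I would try the dyadic-level argument with levels $t_k=q^{k}$ and let $q\to1$ after $p\to\infty$. Then $\Lambda\ge\mathcal M\ge\Lambda$, and nonnegativity is obvious.

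For part (2), I would first show independence from $\Omega$. If $\Omega_1\subset\Omega_2$, monotonicity gives $\mathcal C(\Omega_1)\le\mathcal C(\Omega_2)$. Conversely, gluing a cutoff on $\Omega_2\setminus\Omega_1$ (the relative capacity, whose $p$-th root tends to $1/d$) with the potential of $\Omega_2$ gives $\mathrm{Cap}_p(\Omega_1)^{1/p}\ge c\,\mathrm{Cap}_p(\Omega_2)^{1/p}\cdot$ a factor bounded independently of $p$. Using $p$-superharmonicity comparison, $u_{\Omega_1}\ge\delta$ on $\partial\Omega_2$ with $\delta^{1/p}$ not degenerating is the technical point. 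Under (A) or (B), I would reduce to balls. Symmetrization, i.e.\ the Pólya–Szegő principle driven by the isoperimetry of $o$-centered balls in case (A), or radial potentials in case (B), gives
$$\mathrm{Cap}_p(B(o,r))=\Big(\int_r^\infty |\partial B(o,s)|^{-1/(p-1)}ds\Big)^{1-p}.$$
For the equality \eqref{eq1.3}, it remains to show $\Lambda\ge\mathcal V$. Condition \eqref{eq1.2} gives $h(t)=|\partial B_t|/|B_t|\searrow h_\infty$, and $h_\infty=\mathcal V$ because $\ln V(t)=\int h$. The isoperimetric hypothesis then gives the Cheeger constant $\ge h_\infty$: every $K$ has $|\partial K|\ge |\partial B_r|\ge h_\infty|B_r|=h_\infty|K|$. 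Cheeger's $p$-inequality $\lambda_{1,p}\ge (h/p)^p$ then gives $p\lambda_{1,p}^{1/p}\ge\mathcal V$, and the chain closes. In case (B), I would verify Cheeger directly with the radial calibration $\nabla r$ and the divergence inequality $\Delta r=\varphi'/\varphi\cdot n\ge$ asymptotically the ratio. The main obstacle I expect is the sharp constant in $\lambda_{1,p}\gtrsim m_p$.
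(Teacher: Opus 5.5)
\textbf{Main gap: the equality case of part (2).} Your Cheeger argument needs $|\partial B(o,r)|\ge\mathcal V(M)\,|B(o,r)|$ for \emph{every} $r>0$. But \eqref{eq1.2} only says that $S/V$ (with $S(r)=|\partial B(o,r)|$, $V(r)=|B(o,r)|$) is decreasing for $r\ge R_0$, so $S/V\ge\mathcal V$ is known only there. Nothing prevents $S/V<\mathcal V$ at intermediate radii. In case (B), take $\varphi$ hyperbolic near $o$, then a long almost-cylindrical neck (where $S$ is nearly constant and $S/V$ becomes small), then a stretch of very fast growth pushing $S/V$ above $n$, then $S=Ae^{nt}$, for which $S/V$ decreases to $n=\mathcal V(M)$. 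A centered ball ending in the neck shows that the Cheeger constant satisfies $h(M)<\mathcal V(M)$. So $\lambda_{1,p}\ge (h(M)/p)^p$ only yields $\Lambda(M)\ge h(M)<\mathcal V(M)$, although the theorem asserts $\Lambda=\mathcal V$ for this manifold. Your calibration for (B) does not help either: $\Delta r=S'/S$, and \eqref{eq1.2} is equivalent to $S'/S\le S/V$, which is the wrong direction, so $\inf S'/S$ can lie below $\mathcal V$ even at arbitrarily large radii.

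The missing idea is that any $p$-independent isoperimetric constant sees the compact part of $M$, whereas $\Lambda(M)$ does not. The paper keeps the $p$-dependence. By \eqref{eq2.3} restricted to centered balls (in case (B) after reducing to radial functions), $\lambda_{1,p}(M)\ge\frac{(p-1)^{p-1}}{p^p}\inf_{r>0}f(r)$, with $f(r)=I_p(r)^{1-p}/V(r)$ and $I_p(r)=\int_r^\infty S^{\frac{1}{1-p}}dt$. Condition \eqref{eq1.2} makes $f$ decreasing on $[R_0,\infty)$ with limit $\mathcal V^p/(p-1)^{p-1}$. On $(0,R_0]$ the crude bound $f\ge c_p=I_p(0)^{1-p}/V(R_0)$ suffices: $I_p(0)$ and $I_p(R_0)$ have the same leading asymptotics $(p-1)/\mathcal V$, so $(p-1)c_p^{1/p}$ still tends to at least $\mathcal V$.

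\textbf{Independence from $\Omega$.} Your argument here does not close either. Gluing a cutoff on $\Omega_2\setminus\Omega_1$ to the potential of $\Omega_2$ produces a test function for $\Omega_1$, i.e.\ an upper bound on $\mathrm{Cap}_p(\Omega_1)$, which is the wrong direction. The comparison that does go the right way uses $\min\{u_{\Omega_1}/\delta_p,1\}$ with $\delta_p=\min_{\partial\Omega_2}u_{\Omega_1}$, giving $\mathrm{Cap}_p(\Omega_2)\le\delta_p^{-p}\mathrm{Cap}_p(\Omega_1)$. After taking $p$-th roots you then need $\delta_p\to1$, not merely $\delta_p^{1/p}$ bounded below. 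Proving this on a general manifold would settle the open problem posed in the remark after Theorem~\ref{thm1.6}, so it cannot be left as a technical point.

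Under (A)/(B) it is unnecessary. Sandwich $I_p(r_1)^{1-p}\le\mathrm{Cap}_p(\Omega)\le I_p(r_2)^{1-p}$: the lower bound comes from \eqref{eq2.2} with $I(|B(o,t)|)=S(t)$ and $|B(o,r_1)|=|\Omega|$ in case (A), and from $B(o,r_1)\subset\Omega$ in case (B); the upper bound comes from $\Omega\subset B(o,r_2)$. Then $I_p(r_1)/I_p(r_2)=1+\int_{r_1}^{r_2}S^{\frac{1}{1-p}}dt\,/\,I_p(r_2)\to1$, since the numerator tends to $r_2-r_1$ and $I_p(r_2)\to\infty$ by Fatou.

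\textbf{Part (1) is fine.} Your exponential test function $e^{-a(r-r_0)}$ with $ap=\mathcal V+2\varepsilon$, integrated by parts against $dV$ and truncated as $(u-\delta)_+/(1-\delta)$ to get compact support, gives $\mathrm{Cap}_p(\Omega)\le\big(\frac{\mathcal V+2\varepsilon}{p}\big)^pK_\varepsilon$ with $K_\varepsilon$ independent of $p$. This is a valid and more elementary alternative to the paper's optimal radial profile plus Jensen argument, and it needs no smoothness of $V$. The paper's sharper quantity $(p-1)/I_p(R)$, however, is what it reuses in Section 4. Finally, the ``main obstacle'' you flag is not one: $\big(p^p/(p-1)^{p-1}\big)^{1/p}=\frac{p}{p-1}(p-1)^{1/p}\to1$, so \eqref{eq2.3} gives $\Lambda(M)=\mathcal M(M)$ directly, with no dyadic-level argument.
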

  \begin{remark}
  \begin{itemize}
    \item The inequality in case \eqref{eq1.1} does not necessarily attain equality; in fact, we can provide some examples where the strict inequalities hold, as shown in Example \ref{ex3.1} and Example \ref{ex3.2}.
    \item From Theorem \ref{thm1.6}, it is natural to pose the following question:
    \begin{problem}
      For any noncompact manifold, does $\mathcal{C}(\Omega)$ depend on $\Omega?$
    \end{problem}
  \end{itemize}

  \end{remark}
    Before we introduce the following result, we observe that $\mathcal{C}(\Omega)$  scales as follows under a change of metric:
  $$
  \mathcal{C}(\Omega)_{\lambda^2 g}=\limsup\limits_{p\rightarrow\infty}p\cdot\big[{\mathrm{Cap}_p(\Omega)_{\lambda^2 g}}\big]^{\frac 1p}
  =\limsup\limits_{p\rightarrow\infty}p\cdot\big[\lambda^{n-p}{\mathrm{Cap}_p(\Omega)_{g}}\big]^{\frac 1p}=\lambda^{-1}\mathcal{C}(\Omega)_{g}
  $$ for any $\lambda>0.$
  Similarly, one can check that $\Lambda(M)$ and $\mathcal{V}(M)$ are also scale covariant with exponent $-1.$
  \par Here are some estimates of $\mathcal{C}(\Omega)$ under certain curvature conditions.
    \begin{theorem}\label{thm1.9}
    Let $(M,g)$ be a complete non-compact Riemannian manifold of dimension $n+1$ and $\Omega$ be a bounded compact domain in $M.$
      \begin{itemize}
      \item [(1)]   If ${\rm Ric}[g] \geq 0,$ then $\mathcal{C}(\Omega)=0$ for all $\Omega.$
    \item [(2)]  If ${\rm Ric}[g]\geq-ng,$ then $\mathcal{C}(\Omega)\leq \mathcal V(M)\leq n$  for all $\Omega.$
    \item [(3)] If ${\rm Ric}[g]\geq-ng$ and there exists an exhaustion $\{\Omega_k\}_{k=1}^\infty$  of $M$ with mean curvature $H_k$ of $\partial\Omega_k$ such that $H_k\geq n,$
        then $\mathcal V(M)=\mathcal{C}(\Omega)=\Lambda(M)=n$  for all $\Omega.$
  \item [(4)] If $(M,g)$ is a conformally compact Einstein manifold with conformal infinity of non-negative Yamabe constant, then $\mathcal V(M)=\mathcal{C}(\Omega)=\Lambda(M)=n$  for all $\Omega.$
  \end{itemize}
\end{theorem}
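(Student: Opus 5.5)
Each part reduces to Theorem \ref{thm1.6}(1), namely $\mathcal V(M)\ge\mathcal C(\Omega)\ge\Lambda(M)\ge 0$, together with standard comparison geometry.

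For (1), $\mathrm{Ric}\ge 0$ and Bishop--Gromov give $|B(o,R)|\le \omega_{n+1}R^{n+1}$. Hence $\ln|B(o,R)|/R\to 0$, so $\mathcal V(M)=0$, and the chain in \eqref{eq1.1} forces $\mathcal C(\Omega)=0$ for every $\Omega$.

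For (2), Bishop--Gromov with $\mathrm{Ric}\ge -ng$ gives
\[
|B(o,R)|\le \omega_n\int_0^R\sinh^n t\,dt\le C e^{nR},
\]
so $\mathcal V(M)\le n$, and \eqref{eq1.1} again yields $\mathcal C(\Omega)\le\mathcal V(M)\le n$.

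For (3), the upper bound $\mathcal C(\Omega)\le\mathcal V(M)\le n$ comes from (2). It therefore suffices to show $\Lambda(M)\ge n$, since \eqref{eq1.1} then squeezes everything to $n$. The plan is a barrier argument on each $\Omega_k$.
\begin{itemize}
\item Let $\rho_k=d(\cdot,\partial\Omega_k)$ inside $\Omega_k$.
\item Laplacian comparison for the distance to a hypersurface with $H\ge n$ under $\mathrm{Ric}\ge -ng$ gives $\Delta\rho_k\le -n$ in the barrier/viscosity sense. Sign conventions have to be checked so that the mean curvature points the right way.
\item Equivalently, $\Delta(-\rho_k)\ge n$, so $\rho_k$ serves as a test vector field for the divergence trick.
\item For $u\in C_0^\infty(\Omega_k)$, integrate $\mathrm{div}(|u|^p\nabla\rho_k)$. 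This gives $n\int|u|^p\le p\int|u|^{p-1}|\nabla u|$, and H\"older then yields
\[
\lambda_{1,p}(\Omega_k)\ge (n/p)^p.
\]
\item The singular set of $\rho_k$ is handled in the usual way, via the distributional inequality on the cut locus, since the cut locus carries a singular measure of the right sign.
\item Letting $k\to\infty$ gives $\lambda_{1,p}(M)\ge (n/p)^p$, i.e.\ $p\,\lambda_{1,p}(M)^{1/p}\ge n$, hence $\Lambda(M)\ge n$.
\end{itemize}
Combined with the upper bound this gives $\mathcal V(M)=\mathcal C(\Omega)=\Lambda(M)=n$. The main obstacle is justifying the distributional inequality $\Delta\rho_k\le -n$ across the cut locus of $\partial\Omega_k$, which is Heintze--Karcher-type comparison. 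I would invoke the standard result that $\Delta\rho$ is bounded above globally in the distributional sense by the smooth comparison bound.

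For (4), a conformally compact Einstein manifold has $\mathrm{Ric}=-ng$, so it suffices to produce an exhaustion as in (3).
\begin{itemize}
\item Choose the geodesic defining function $x$ associated with a Yamabe representative $\hat g$ of the conformal infinity with scalar curvature $\ge 0$.
\item Take $\Omega_k=\{x\ge 1/k\}$.
\item The asymptotic expansion $g=x^{-2}(dx^2+\hat g-x^2 P+\dots)$ shows that the level sets $\{x=\epsilon\}$ have mean curvature $H=n+c\,\epsilon^2 R_{\hat g}+O(\epsilon^3)$ with $c>0$.
\end{itemize}
With $R_{\hat g}\ge0$ this is not quite $\ge n$ pointwise because of the error term. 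So the plan is to instead use the result of Lee (spectral bound $\lambda_{1,2}=n^2/4$ under nonnegative Yamabe infinity). His proof constructs a positive function $\phi$ with $\Delta\phi\ge n\phi|\nabla\ln\phi|$-type control, namely $|\nabla \ln\phi|\le 1$ and $\Delta\ln\phi\ge n$ essentially. Using $\ln\phi$ as the test field in the same divergence argument gives $\lambda_{1,p}(M)\ge(n/p)^p$. This is the delicate step; failing that, I would perturb the exhaustion via Lee's modified defining function so that $H_k\ge n$ holds exactly and apply (3) directly.
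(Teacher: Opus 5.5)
Your proposal is correct and follows the paper's proof in essence. Part (2) is identical. In (3) and (4) the paper simply cites the bound $\lambda_{1,p}\ge (n/p)^p$: from \cite{jin2025lower} for the exhaustion with $H_k\ge n$, and from the $p$-extensions \cite{hijazi2020cheeger,jin2025lower} of Lee's theorem. You instead rebuild that bound with the divergence argument, using $\nabla\rho_k$ in (3) and, in (4), $\nabla\ln\phi$ for Lee's positive function with $|\nabla\ln\phi|\le 1$ and $\Delta\ln\phi\ge n$; Lee's construction does supply such a $\phi$.

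The only genuine deviation is in (1). There the paper uses \eqref{eq2.1} and $|\partial B(o,t)|\le\omega_n t^n$ to show that $M$ is $p$-parabolic for $p\ge n+1$, which gives the stronger fact that $\mathrm{Cap}_p(\Omega)=0$ for every such $p$. You instead deduce $\mathcal V(M)=0$ and squeeze using \eqref{eq1.1}. Both arguments are valid.
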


\par In the following, we study the application of Theorem \ref{thm1.6} on the topological entropy of close manifolds, which is a core concept in topological dynamical systems \cite{Knieper1997}\cite{Ma1979}. In particular, we provide several geometric and dynamical applications of the newly introduced asymptotic geometric quantities.
\begin{theorem}\label{thm:1.10}
Let $N$ be a closed manifold of dimension $n+1$ and $(M,g)$ be its  universal cover. Assume that $(M,g)$ is a non-flat Hadamard manifold and $(M,g,\Omega,o)$ satisfies satisfies one of the conditions $(A)$ and $(B)$ in Theorem \ref{thm1.6} (2),
Then
\begin{equation}\label{eq1.4}
  \mathcal V(M)=\mathcal{C}(\Omega)=\Lambda(M)=\mathcal M(M)=h_{top}(\phi),
\end{equation}
\end{theorem}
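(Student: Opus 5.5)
The plan is to reduce everything to Theorem \ref{thm1.6}(2), supplemented by Manning's classical theorem relating volume entropy and topological entropy. Here $\phi$ denotes the geodesic flow on the unit tangent bundle $SN$ of $N$.

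First I verify the monotonicity hypothesis \eqref{eq1.2}. Since $M$ is Hadamard, it has no conjugate points. For the center $o$ the distance function $r=d(o,\cdot)$ is smooth away from $o$, and the Laplacian comparison with the Euclidean model gives $\Delta r\ge n/r$. More useful is the Riccati equation for the shape operator $U$ of the spheres $S(o,t)$. Write $a(t)=|\partial B(o,t)|=\int_{S^n} J(t,\theta)\,d\theta$ in polar coordinates, where $J$ is the Jacobian. Nonpositive curvature implies that $\log J(t,\theta)$ is convex in $t$: indeed $(\log J)'=\operatorname{tr}U$, and
$$(\operatorname{tr}U)'=-\operatorname{tr}U^2-\operatorname{Ric}(\dot\gamma,\dot\gamma)\ge -\operatorname{tr}U^2,$$
which alone does not give convexity. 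So I would rather argue directly that $a(t)/|B(o,t)|$ is eventually decreasing. The quotient $f(t)=a(t)/V(t)$, with $V(t)=|B(o,t)|$, satisfies
$$f'=\frac{a'V-a^2}{V^2}.$$
It therefore suffices to show $a'V\le a^2$ for large $t$, i.e. that $\log a$ is concave in a weak sense, or alternatively that the quotient has a limit. If direct concavity fails, I would use the condition only through its role in the proof of Theorem \ref{thm1.6}, namely to get $\liminf a/V\ge \mathcal V(M)$. On a cocompact Hadamard universal cover this lower bound can be obtained from Knieper's asymptotics $V(t)\asymp t^{(k-1)/2}e^{ht}$ (with $k$ the rank), or more simply from $a(t)=V'(t)$ together with the coarse estimate $V(t)\le Ce^{ht}\cdot\mathrm{poly}(t)$, which forces $a/V\to h$ along the full range. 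This is the step I expect to be the main obstacle. I would first try Knieper's purely exponential growth estimate for the sphere volume $a(t)$ and the ball volume $V(t)$, which yields $a(t)/V(t)\to h$. With that limit in hand I would revisit the proof of \eqref{eq1.3} to see that it only needs this limit rather than genuine monotonicity.

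Granting this, Theorem \ref{thm1.6}(2) under (A) or (B) gives
$$\mathcal V(M)=\mathcal C(\Omega)=\Lambda(M)=\mathcal M(M).$$
Finally, Manning's theorem states that for a closed manifold of nonpositive curvature, $h_{top}(\phi)$ equals the volume entropy of the universal cover; the latter is computed with balls in $M$, which is exactly $\mathcal V(M)$ since the $\limsup$ is actually a limit in the cocompact case. This gives $\mathcal V(M)=h_{top}(\phi)$ and completes \eqref{eq1.4}. The non-flatness assumption guarantees that $h_{top}(\phi)>0$, since by Manning or Pesin rank-one considerations a nonflat closed nonpositively curved manifold has exponential volume growth. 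This makes the statement nondegenerate, although it is not needed for the equalities themselves.
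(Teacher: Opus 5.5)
There is a genuine gap, at exactly the step you flag as the main obstacle. Write $S(t)=|\partial B(o,t)|$ (your $a(t)$) and $V(t)=|B(o,t)|$. Knieper's theorem gives only two-sided comparability, $A^{-1}t^\alpha e^{\mathcal V(M)t}\le S(t)\le A\,t^\alpha e^{\mathcal V(M)t}$ for $t\ge r_0$. Integrating, $V(t)$ is comparable to the same profile, so for large $t$ the ratio $S/V$ is only pinched between two positive constants. Neither this nor an upper bound $V(t)\le Ce^{ht}\mathrm{poly}(t)$ together with $V'=S$ forces $S/V\to h$. For example, a radial profile with $V(t)=e^{ht}(2+\tfrac12\sin Kt)$ for large $t$ and $0<K<h$ is increasing and satisfies $V\asymp V'\asymp e^{ht}$, yet $V'/V-h$ keeps oscillating with amplitude $K/4$. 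So you have established neither \eqref{eq1.2} nor the weaker input $\liminf_{t\to\infty}S(t)/V(t)\ge\mathcal V(M)$. You are right that this weaker input would suffice in the proof of \eqref{eq1.3}. But obtaining it would require something like a Margulis-type asymptotic $S(t)\sim c\,t^\alpha e^{ht}$ with convergent ratio, which is far deeper than the comparability estimate.

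The missing observation is that no limit is needed. In \eqref{eq2.7}--\eqref{eq2.10} you only need a lower bound on $\inf_{r>0}{\rm Cap}_p(B(o,r))/V(r)$ up to a multiplicative constant independent of $p$. Such constants disappear after taking $p$-th roots and letting $p\to\infty$. Under (A) or (B), ${\rm Cap}_p(B(o,r))=\big(\int_r^\infty S(t)^{\frac1{1-p}}\,dt\big)^{1-p}$. For $r\ge r_0$, Knieper's lower bound combined with $t^\alpha\ge r^\alpha$ on $[r,\infty)$ gives ${\rm Cap}_p(B(o,r))\ge A^{-1}r^\alpha\big(\frac{p-1}{\mathcal V(M)}\big)^{1-p}e^{\mathcal V(M)r}$. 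The upper bound gives $V(r)\le C r^\alpha e^{\mathcal V(M)r}$. Hence the ratio is at least $(AC)^{-1}\big(\frac{p-1}{\mathcal V(M)}\big)^{1-p}$ for $r\ge r_0$. The range $r<r_0$ is handled by the crude term $c_p$ exactly as in the proof of Theorem \ref{thm1.6}. This gives $\Lambda(M)\ge\mathcal V(M)$, after which \eqref{eq1.1} and Manning's theorem finish the argument as you describe. This is how the paper proceeds: it bypasses \eqref{eq1.2} entirely instead of verifying it.
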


As an application of Theorem \ref{thm:1.10}, we obtain the following rigidity result.

\begin{corollary}[Hyperbolic Rigidity]\label{thm:1.11}
Let $(N,g)$ be a closed  $n+1$-dimensional Riemannian manifold with ${\rm Ric}[g]\geq-ng$. If there exists $p>1$ such that the first eigenvalue of  the universal cover $M$ satisfies
$$\lambda_{1,p}(M)\geq\left(\frac{n}p\right)^p.$$
Then $N$ is isometric to a compact hyperbolic manifold, and consequently $M$ is isometric to the hyperbolic space $\mathbb{H}^{n+1}.$
\end{corollary}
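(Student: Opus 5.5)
The plan is to combine the monotonicity of $p\,(\lambda_{1,p}(M))^{1/p}$ with the volume-entropy upper bound of Theorem \ref{thm1.6}(1) and Theorem \ref{thm1.9}(2), and then invoke the classical volume entropy rigidity theorem (Ledrappier--Wang, refined from Besson--Courtois--Gallot), which says that for a closed manifold with ${\rm Ric}\geq -n$ the universal cover has $\mathcal V(M)\leq n$, with equality if and only if $N$ is hyperbolic.

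First, by \cite[Lemma 2.4]{jin2025lower} the quantity $q\mapsto q\,(\lambda_{1,q}(M))^{1/q}$ is nondecreasing in $q$. The hypothesis $\lambda_{1,p}(M)\geq (n/p)^p$ is equivalent to $p\,(\lambda_{1,p}(M))^{1/p}\geq n$. Hence, for every $q\geq p$, we also have $q\,(\lambda_{1,q}(M))^{1/q}\ge n$, and letting $q\to\infty$ gives $\Lambda(M)\geq n$.

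Second, I apply \eqref{eq1.1} from Theorem \ref{thm1.6}(1) to a compact smooth domain $\Omega\subset M$. This gives $\Lambda(M)\leq\mathcal C(\Omega)\leq \mathcal V(M)$. Since ${\rm Ric}[g]\ge -ng$ lifts to the universal cover, Theorem \ref{thm1.9}(2) gives $\mathcal V(M)\leq n$, which is Bishop--Gromov volume comparison. Combining with the first step, $n\leq\Lambda(M)\leq\mathcal V(M)\leq n$, so $\mathcal V(M)=n$. This is the maximal value allowed by the Ricci bound.

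Third, I invoke Ledrappier--Wang's rigidity: if $N$ is closed with ${\rm Ric}\geq -n$ and the volume entropy of its universal cover equals $n$, then $M$ is isometric to $\mathbb H^{n+1}$ and $N$ is hyperbolic. I must check that the volume entropy used there, defined via balls in the universal cover, agrees with $\mathcal V(M)$ as defined here. It does, since the definition is base-point independent and the limsup is a genuine limit for cocompact covers. Alternatively, one could pass through Theorem \ref{thm:1.10} and entropy rigidity for $h_{top}$, but that requires the Hadamard and isoperimetric hypotheses, which are not assumed here. So the direct route via volume entropy is cleaner.

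The main obstacle is only the citation step: making sure the rigidity theorem applies with just the Ricci lower bound. Besson--Courtois--Gallot need $N$ to carry a hyperbolic metric a priori, whereas Ledrappier--Wang (2010) prove exactly this statement for arbitrary closed $N$ with ${\rm Ric}\ge -n$ and $\mathcal V=n$. Everything else is the short chain of inequalities above.
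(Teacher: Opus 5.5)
Your proposal is correct and follows the paper's proof. Both establish $n\le\Lambda(M)\le\mathcal V(M)\le n$ via Theorem~\ref{thm1.6}(1) and Theorem~\ref{thm1.9}(2), then apply Ledrappier--Wang's volume-entropy rigidity directly, without going through Theorem~\ref{thm:1.10}. You also spell out a step the paper leaves implicit: the monotonicity of $q\mapsto q\,\lambda_{1,q}(M)^{1/q}$, which is what turns the single-$p$ hypothesis into $\Lambda(M)\ge n$.
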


\begin{remark}
\begin{itemize}
\item  This corollary generalizes Ledrappier–Wang's result \cite{LW2010} for $p=2$ to  $p>1$ and can be seen as the counterpart of  Peigné-Sambusetti's result \cite{peigne2019entropy}.
\item  Hyperbolic rigidity still holds if the eigenvalue condition is replaced by condition (3) or (4) of Theorem \ref{thm:1.10}.
\end{itemize}
\end{remark}

\medskip

Next, we now point out that the large $p$ capacitary quantities introduced above
also contain a second-order asymptotic information.  The results proved so
far are first-order entropy results.  More precisely, under the geometric
alternatives in Theorem~1.6, Theorem~1.10 gives
\[
        \mathcal V(M)=\mathcal{C}(\Omega)=\Lambda(M)=M(M)=h_{\mathrm{top}}(\phi).
\]
Equivalently, at the first-order level, the two normalizations
\[
       \lim_{p\rightarrow\infty} p\,{\rm Cap}_p(\Omega)^{\frac1p}
        =\lim_{p\rightarrow\infty}
        (p-1){\rm Cap}_p(\Omega)^{\frac{1}{p-1}}=\mathcal V(M).
\]

The next question is whether this capacitary family sees geometric
information beyond the entropy.  This question is natural in nonpositive
curvature.  Indeed, by Knieper's asymptotic sphere-growth theorem, if $M$
is a nonflat Hadamard manifold with nonpositive sectional curvature and
compact quotient, then
\[
        |\partial B(o,r)|
        \sim
        r^{\frac{(\operatorname{rank}M-1)}{2}}e^{\mathcal V(M) r},
        \qquad r\to\infty
\]
where “$\sim$” denotes infinity of the same order.
 Thus the first-order entropy detects the exponential
factor $e^{\mathcal V(M) r}$, whereas the rank is encoded in the polynomial correction.
Here the rank of a Hadamard manifold $M$ is defined (see \cite{ballmann1985structure}) by
$$
\operatorname{rank}M=\min\big\{\operatorname{rank}(v):v\in SM\big\}
$$
and $\operatorname{rank}(v)$ is the dimension of the space of all
parallel Jacobi fields along the geodesic $\gamma_v(t)$ witch has initial velocity $v.$

This motivates the following second-order quantity.  Assume $\mathcal V(M)>0$ and define
\begin{equation}\label{eq1.5}
        \alpha_{\operatorname{cap}}(\Omega)
        :=
        \lim_{p\to\infty}
        \left(\frac{
        (p-1){\rm Cap}_p(\Omega)^{\frac{1}{p-1}}
        }{
       \mathcal V(M)
        }-1\right)\cdot\frac{p}{\ln p},
\end{equation}
whenever the limit exists.  The normalization by $p-1$ is not meant to
change the first-order entropy limit.  Rather, it removes a universal
second-order correction and isolates the geometric polynomial exponent.

The following theorem is a second-order refinement of Theorem~1.10.

\begin{theorem}
\label{thm:second-order-refinement-110}
Under the assumptions of Theorem~\ref{thm:1.10}, as $p\to\infty$,
\begin{equation}\label{eq1.6}
       \frac{(p-1){\rm Cap}_p(\Omega)^{\frac{1}{p-1}}}{\mathcal V(M)}
        =1 + \left(\frac{\operatorname{rank}M-1}{2}\right)\frac{\ln p}{p}+O\!\left(\frac1p\right).
\end{equation}
Consequently,
\begin{equation}\label{eq1.7}
        \operatorname{rank}M
        =
        1+2\alpha_{\operatorname{cap}}(\Omega).
\end{equation}
\end{theorem}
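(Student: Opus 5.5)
The plan is to reduce the capacity, up to multiplicative constants independent of $p$ (or at worst of size $C^{p}$ with $C$ fixed), to a one-dimensional quantity built from the sphere areas. We then evaluate that quantity by Laplace's method, using Knieper's asymptotics
\[
|\partial B(o,r)|\asymp r^{\beta}e^{hr},\qquad \beta=\tfrac{\operatorname{rank}M-1}{2},\ h=\mathcal V(M),
\]
valid for all $r\ge 1$ with two-sided constants. Under (A) or (B), the proof of Theorem \ref{thm1.6}(2) compares $\mathrm{Cap}_p(\Omega)$ with $\mathrm{Cap}_p(B(o,r_0))$, with $\Omega$ squeezed between two centered balls. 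So it suffices to treat $\Omega=B(o,r_0)$, provided the comparison constants are $p$-independent after taking $(p-1)$-th roots; a constant $C$ raised to the power $1/(p-1)$ is $1+O(1/p)$, which is harmless.

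\textbf{Step 1 (upper bound).} Use radial test functions $u=u(r)$. Optimizing the one-dimensional energy gives the standard formula
\[
\mathrm{Cap}_p(B(o,r_0))\le\Big(\int_{r_0}^\infty |\partial B(o,r)|^{-\frac1{p-1}}\,dr\Big)^{-(p-1)}.
\]
Taking $(p-1)$-th roots,
\[
\mathrm{Cap}_p^{\frac1{p-1}}\le \Big(\int_{r_0}^\infty r^{-\beta/(p-1)}e^{-hr/(p-1)}\,dr\Big)^{-1}\cdot(1+O(1/p)).
\]
Substitute $r=(p-1)s/h$. The integral becomes
\[
\frac{p-1}{h}\Big(\frac{p-1}{h}\Big)^{-\beta/(p-1)}\int s^{-\beta/(p-1)}e^{-s}\,ds .
\]
The last integral is $\Gamma(1-\beta/(p-1))+O(1/p)=1+O(1/p)$, and the lower cutoff contributes only $O(1/p)$. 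Hence
\[
(p-1)\mathrm{Cap}_p^{\frac1{p-1}}\le h\,(p-1)^{\beta/(p-1)}(1+O(1/p))=h\Big(1+\beta\frac{\ln p}{p}+O(1/p)\Big).
\]
This explains the choice of the $p-1$ normalization: it makes the Gamma-type factor $1+O(1/p)$.

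\textbf{Step 2 (lower bound).} This is the main obstacle, since we need a matching lower bound up to $C^{p}$ factors. Under (B) it is immediate: the capacitary potential is radial by uniqueness and symmetry, so the inequality in Step 1 is an equality. Under (A), I would use the coarea/Pólya–Szegő argument already used for Theorem \ref{thm1.6}. Write $\mathrm{Cap}_p\ge\big(\int |\partial\{u>t\}|^{-1/(p-1)}\ldots\big)^{-(p-1)}$ in the Maz'ya form
\[
\mathrm{Cap}_p(\Omega)\ge\Big(\int_{|\Omega|}^\infty I(v)^{-\frac{p}{p-1}}\,dv\Big)^{1-p},
\]
where $I$ is the isoperimetric profile. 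Condition (A) gives $I(|B(o,r)|)\ge|\partial B(o,r)|$. Changing variables $v=|B(o,r)|$, so $dv=|\partial B(o,r)|\,dr$, recovers exactly the radial integral of Step 1. Thus the lower bound equals the upper bound with $\Omega$ replaced by a ball of equal volume. The comparison between $\Omega$ and balls costs only a $p$-independent factor $e^{O(1)}$ inside the integral, hence $1+O(1/p)$ after the root. Knieper's two-sided bounds enter only as multiplicative constants inside the integral, which also produce errors of order $O(1/p)$.

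\textbf{Step 3.} Combining the two bounds gives \eqref{eq1.6}. Then
\[
\Big(\frac{(p-1)\mathrm{Cap}_p^{1/(p-1)}}{h}-1\Big)\frac{p}{\ln p}=\beta+O\Big(\frac1{\ln p}\Big)\to\beta .
\]
So $\alpha_{\operatorname{cap}}(\Omega)$ exists and equals $\beta$, which yields \eqref{eq1.7}. The one point needing care is to check that Knieper's estimate is a genuine two-sided $\asymp$ for all large $r$, not merely logarithmic asymptotics, since the $\ln p/p$ term depends on it.
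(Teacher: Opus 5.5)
Your proposal is correct and follows the paper's proof essentially step for step. The paper likewise squeezes $(p-1)\mathrm{Cap}_p(\Omega)^{1/(p-1)}$ between $(p-1)/I_p(R_-)$ and $(p-1)/I_p(R_+)$ for fixed centered radii, using the radial formula under (B) and the Maz'ya--isoperimetric lower bound under (A). It then evaluates $\int_R^\infty S^{-1/(p-1)}\,dt$ by the same Gamma-type substitution with Knieper's two-sided bounds, where the fixed cutoff and the constants $C^{\pm 1/(p-1)}$ contribute only $O(1/p)$.
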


\begin{corollary}
\label{cor:second-order-consequences}
Under the assumptions of Theorem~\ref{thm:second-order-refinement-110},
the normalization used in the definition of $\mathcal C(\Omega)$ satisfies
\begin{equation}\label{eq1.8}
 \operatorname{rank}M
        =
        -1+
        2
        \lim_{p\to\infty}
        \left(\frac{
        p{\rm Cap}_p(\Omega)^{\frac1p}
        }{
        \mathcal V(M)
        }-1\right)\cdot\frac{p}{\ln p}
.
\end{equation}
In particular, $M$ has rank one if and only if
$$
        (p-1){\rm Cap}_p(\Omega)^{\frac{1}{p-1}}
        =
        \mathcal V(M)+O\!\left(\frac1p\right).
$$

Moreover, the second-order invariant separates spaces with the same
entropy but different ranks.  Indeed, if $M_1$ and $M_2$ both satisfy the
assumptions of Theorem~\ref{thm:second-order-refinement-110} and
$$
        \mathcal V(M_1)=\mathcal V(M_2),
        \qquad
        \operatorname{rank}M_1\neq \operatorname{rank}M_2,
$$
then, for any admissible compact smooth domains $\Omega_i\subset M_i$,
$$
        \alpha_{\operatorname{cap}}(\Omega_1)
        \neq
        \alpha_{\operatorname{cap}}(\Omega_2).
$$
\end{corollary}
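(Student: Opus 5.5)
The plan is to derive everything from the expansion \eqref{eq1.6} of Theorem~\ref{thm:second-order-refinement-110} by elementary asymptotic algebra; no new geometric input is needed. Throughout, set
\[
A_p:=\frac{(p-1){\rm Cap}_p(\Omega)^{\frac{1}{p-1}}}{\mathcal V(M)},\qquad a:=\frac{\operatorname{rank}M-1}{2}.
\]
Theorem~\ref{thm:second-order-refinement-110} gives $A_p=1+a\frac{\ln p}{p}+O(\frac1p)$. In particular $\ln A_p=O(\frac{\ln p}{p})$. The quantity $\alpha_{\rm cap}$ is only defined when $\mathcal V(M)>0$, which holds under the standing assumptions, so dividing by $\mathcal V(M)$ is legitimate.

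\emph{Step 1: change of normalization.} Writing ${\rm Cap}_p(\Omega)^{\frac{1}{p-1}}=\mathcal V(M)A_p/(p-1)$ and raising to the power $\frac{p-1}{p}=1-\frac1p$, we get
\[
{\rm Cap}_p(\Omega)^{\frac1p}=\frac{\mathcal V(M)A_p}{p-1}\exp\Big(\frac{\ln(p-1)-\ln\mathcal V(M)-\ln A_p}{p}\Big).
\]
The exponent equals $\frac{\ln p}{p}+O(\frac1p)$, so the exponential is $1+\frac{\ln p}{p}+O(\frac1p)$. Using also $\frac{p}{p-1}=1+O(\frac1p)$, we obtain
\[
\frac{p\,{\rm Cap}_p(\Omega)^{\frac1p}}{\mathcal V(M)}=1+(a+1)\frac{\ln p}{p}+O\Big(\frac1p\Big).
\]
Subtracting $1$, multiplying by $p/\ln p$, and noting that $O(\frac1p)\cdot\frac{p}{\ln p}\to0$, the limit in \eqref{eq1.8} exists and equals $a+1$. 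Hence $-1+2(a+1)=\operatorname{rank}M$, which is \eqref{eq1.8}.

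\emph{Step 2: rank-one criterion.} If $\operatorname{rank}M=1$, then $a=0$, and \eqref{eq1.6} multiplied by $\mathcal V(M)$ reads $(p-1){\rm Cap}_p(\Omega)^{\frac{1}{p-1}}=\mathcal V(M)+O(\frac1p)$. Conversely, suppose this identity holds. Then $(A_p-1)\frac{p}{\ln p}=O(\frac{1}{\ln p})\to0$, so $\alpha_{\rm cap}(\Omega)=0$, and \eqref{eq1.7} forces $\operatorname{rank}M=1$.

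\emph{Step 3: separation.} By \eqref{eq1.7}, $\alpha_{\rm cap}(\Omega_i)=(\operatorname{rank}M_i-1)/2$ for every admissible $\Omega_i$. In particular this value is independent of the domain, and it differs for $M_1$ and $M_2$ whenever their ranks differ. The hypothesis $\mathcal V(M_1)=\mathcal V(M_2)$ plays no role in the inequality itself; it only shows that the first-order invariant cannot tell the two spaces apart, while the second-order one can.

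The only point requiring care is Step 1: one must track the $\frac{1}{p}\ln(p-1)$ term coming from the exponent change. This term is exactly the ``universal correction'' that shifts the coefficient by $1$. All remaining error terms are $O(\frac1p)=o(\frac{\ln p}{p})$, so I expect no real obstacle.
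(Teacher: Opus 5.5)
Your proposal is correct and follows the paper's proof. The paper likewise writes $p\,{\rm Cap}_p(\Omega)^{1/p}=p\bigl(\Theta_p/(p-1)\bigr)^{(p-1)/p}$ with $\Theta_p=(p-1){\rm Cap}_p(\Omega)^{\frac{1}{p-1}}$, takes logarithms, and isolates the $\frac{\ln(p-1)}{p}=\frac{\ln p}{p}+O(\frac1p)$ term that shifts the coefficient from $\frac{\operatorname{rank}M-1}{2}$ to $\frac{\operatorname{rank}M+1}{2}$. Your Steps 2 and 3 are also correct, including your observation that equal entropy is not actually needed. They spell out the rank-one criterion and the separation claim, which the paper leaves as immediate consequences of \eqref{eq1.6} and \eqref{eq1.7}.
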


In summary, the large \(p\) quantities introduced in this paper,
namely the infinity capacity \(\mathcal C(\Omega)\), the infinity
eigenvalue \(\Lambda(M)\), and the Maz'ya limit \(\mathcal M(M)\), provide
potential-theoretic invariants associated with the Riemannian metric and
the \(p\)-Laplacian.  Under the geometric hypotheses considered above,
these invariants recover the volume entropy; on universal covers of closed
nonpositively curved manifolds, this common value agrees with the
topological entropy of the geodesic flow.

The second-order refinement shows that the capacitary family contains
information beyond the first-order entropy.  In the compact-quotient
nonpositively curved setting, the logarithmic correction to the large \(p\)
capacity detects the polynomial correction in the growth of geodesic
spheres, and hence the rank.  Thus large \(p\) capacitary invariants provide
analytic probes of geometric and dynamical features which are not visible
from entropy alone.

Finally, the examples below show that the equalities
\[
        \mathcal C(\Omega)=\Lambda(M)=\mathcal M(M)=\mathcal V(M)
\]
are not automatic on general noncompact manifolds.  Irregular radial volume
growth may lead to strict inequalities, and may also force one to use
\(\limsup\) in the definition of \(\mathcal C(\Omega)\).  This suggests
that the large \(p\) invariants retain finer asymptotic information than
the volume entropy alone.

\par The paper is organized as follows. Section~2 studies the infinity capacity $\mathcal C(\Omega)$ and proves Theorems~\ref{thm1.6} and \ref{thm1.9}. Section~3 applies these results to universal covers of closed manifolds, identifies the large $p$ quantities with entropy under the hypotheses of Theorem~\ref{thm:1.10}, and derives the corresponding rigidity result. Section~4 proves the second-order capacitary refinement and the rank-detection formula. Section~5 presents examples showing that the equalities in \eqref{eq1.1} may fail in general. Finally, the appendix proves that the infinity capacity of a condenser equals the reciprocal of the distance between the boundary components, using the equivalence between infinity harmonic functions and absolutely gradient minimizing extensions on Riemannian manifolds.

\section{Proof of Theorem 1.6 and 1.9}
Before we prove Theorem 1.6, we will first provide a well-known result concerning the estimate of capacity: upper bound via flux and lower bound via the isoperimetric function .
 \begin{lemma}[\cite{polya1951isoperimetric,grigor1999isoperimetric}]
Let $(M,g)$ be a complete and non-compact Riemannian manifold and $o\in M.$  Then for any $p>1$ and $r>0,$
 \begin{equation}\label{eq2.1}
   {\rm Cap}_p(B(o,r))\leq \left(\int_r^\infty |\partial B(o,t)|^{\frac{1}{1-p}}\,dt\right)^{1-p}.
 \end{equation}
 If $M$ admits an isoperimetric function $I(\cdot),$
 then for any bounded domain $\Omega,$
 \begin{equation}\label{eq2.2}
 {\rm Cap}_p(\Omega)\geq\left(\int_{|\Omega|}^{|M|} \big[I(\tau)\big]^{\frac{p}{1-p}}\,d\tau\right)^{1-p}.
\end{equation}
 Here $I(\tau)$ is defined by
 $$
 I(\tau)=\inf\Big\{|\partial\Omega|:\ \Omega\subseteq M\ \text{is precompact with smooth boundary and}\ |\Omega|=\tau\Big\}
 $$
 \end{lemma}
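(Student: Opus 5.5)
For the upper bound \eqref{eq2.1} I will use a radial test function. Fix $R>r$ and set
$$
\phi(t)=\frac{\int_t^R |\partial B(o,s)|^{\frac{1}{1-p}}\,ds}{\int_r^R |\partial B(o,s)|^{\frac{1}{1-p}}\,ds}\quad (r\le t\le R),
$$
with $\phi=1$ for $t\le r$ and $\phi=0$ for $t\ge R$. Then $u=\phi(d(o,\cdot))$ is Lipschitz with compact support, equals $1$ on $B(o,r)$, and satisfies $|\nabla u|=|\phi'(d)|$ almost everywhere, since $|\nabla d|=1$ off the cut locus, which has measure zero. The coarea formula, applied with the distance function, gives
$$
\int_M|\nabla u|^p = \int_r^R |\phi'(t)|^p\,|\partial B(o,t)|\,dt .
$$
Inserting $\phi'$, the integrand is $|\partial B|^{\frac{p}{1-p}+1}=|\partial B|^{\frac{1}{1-p}}$, so the energy is
$$
\Big(\int_r^R|\partial B(o,t)|^{\frac1{1-p}}dt\Big)^{1-p}.
$$
Letting $R\to\infty$ yields \eqref{eq2.1}. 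The choice of $\phi$ is the one that optimizes the one-dimensional problem, by Hölder's inequality. There are two technical points here. First, $|\partial B(o,t)|$ must be read as the $n$-dimensional measure of the level set of $d$, which is finite and defined for a.e.\ $t$. Second, the infimum over Lipschitz compactly supported functions agrees with the infimum over $C_0^\infty$, by density.

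For the lower bound \eqref{eq2.2} I will use the Pólya–Szegő/Maz'ya level-set argument. Take an admissible $u$, and by truncation assume $0\le u\le1$ with $u=1$ on $\Omega$. Let $\mu(t)=|\{u>t\}|$, so that $\mu(t)\ge|\Omega|$ for $t<1$ and $\mu$ is nonincreasing. By coarea,
$$
\int|\nabla u|^p=\int_0^1\int_{\{u=t\}}|\nabla u|^{p-1}\,d\sigma\,dt,
\qquad
-\mu'(t)\ge\int_{\{u=t\}}|\nabla u|^{-1}\,d\sigma
$$
for a.e.\ regular $t$, where $\mu'$ denotes the derivative where it exists. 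Hölder's inequality on the level set gives
$$
|\{u=t\}|\le\Big(\int|\nabla u|^{p-1}\Big)^{1/p}\Big(\int|\nabla u|^{-1}\Big)^{(p-1)/p},
$$
hence
$$
\int_{\{u=t\}}|\nabla u|^{p-1}\ge \frac{I(\mu(t))^p}{(-\mu'(t))^{p-1}} .
$$
Then Hölder in $t$ is applied to
$$
\int_0^1 \frac{-\mu'}{I(\mu)^{p/(p-1)}}\,dt .
$$
Written out, this is
$$
\int_0^1\frac{-\mu'}{I(\mu)^{\frac p{p-1}}}\,dt\le\Big(\int_0^1\frac{I(\mu)^p}{(-\mu')^{p-1}}dt\Big)^{\frac1p}\Big(\int_0^1 dt\Big)^{\frac{p-1}p}.
$$
The left side is at most $\int_{|\Omega|}^{|M|}I(\tau)^{\frac{p}{1-p}}\,d\tau$ after the change of variables $\tau=\mu(t)$. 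Raising to the power $p$ and rearranging gives
$$
\int|\nabla u|^p\ge\Big(\int_{|\Omega|}^{|M|}I^{\frac p{1-p}}d\tau\Big)^{1-p}.
$$
Taking the infimum over $u$ proves the claim.

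The main obstacle is the change of variables, because $\mu$ need not be absolutely continuous. It is only monotone, so its jump and singular parts must be handled. The fix is that $\int -\mu'\,g(\mu)\,dt\le\int g(\tau)\,d\tau$ over the range of $\mu$, for nonnegative $g$, which holds for monotone functions; the inequality runs in the favorable direction. Sard's theorem applied to smooth $u$ ensures that almost every $t$ is a regular value.
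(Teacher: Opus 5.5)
Your upper bound \eqref{eq2.1} is the standard radial test-function argument and is fine. One point should be added: you need $\int_r^R|\partial B(o,t)|^{\frac{1}{1-p}}dt<\infty$, both to define $\phi$ and to call $u$ Lipschitz. This holds because $|\partial B(o,t)|$ is bounded below on $[r,R]$, for instance by a relative isoperimetric inequality on a compact domain containing $\overline{B(o,R+1)}$.

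Your level-set setup for \eqref{eq2.2} is the classical P\'olya--Szeg\H{o}/Maz'ya argument. The paper does not prove this lemma; it only cites \cite{polya1951isoperimetric,grigor1999isoperimetric}. However, your H\"older step in $t$ is wrong as written. Put $f(t)=I(\mu(t))^p/(-\mu'(t))^{p-1}$. Your left-hand integrand is exactly $f^{-1/(p-1)}$, so your display asserts $\int_0^1 f^{-\frac{1}{p-1}}\,dt\le\big(\int_0^1 f\,dt\big)^{1/p}$. The two sides scale differently under $f\mapsto cf$, and the inequality is false: with $I(\mu)\equiv1$ and $-\mu'\equiv m>1$, the left side is $m$ and the right side is $m^{-(p-1)/p}<1$. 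Even if you grant it, raising to the power $p$ gives $\int_0^1 f\ge\big(\int_0^1 f^{-\frac1{p-1}}\big)^{p}$. That is a lower bound by a \emph{positive} power of $\int_0^1 f^{-1/(p-1)}$. It cannot be combined with the upper bound $\int_0^1 f^{-1/(p-1)}\le\int_{|\Omega|}^{|M|}I^{\frac p{1-p}}\,d\tau$ from the change of variables, so the stated conclusion does not follow.

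The fix is to apply H\"older to $\int_0^1dt$ itself:
\[
1=\int_0^1 f^{\frac1p}\,f^{-\frac1p}\,dt\le\Big(\int_0^1 f\,dt\Big)^{\frac1p}\Big(\int_0^1 f^{-\frac1{p-1}}\,dt\Big)^{\frac{p-1}{p}}.
\]
This gives
\[
\int_M|\nabla u|^p\ \ge\ \int_0^1 f\,dt\ \ge\ \Big(\int_0^1\frac{-\mu'}{I(\mu)^{\frac p{p-1}}}\,dt\Big)^{1-p}\ \ge\ \Big(\int_{|\Omega|}^{|M|}I(\tau)^{\frac p{1-p}}\,d\tau\Big)^{1-p}.
\]
The last step uses your monotone change-of-variables inequality together with $1-p<0$. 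That sign is what turns an upper bound on $\int_0^1 f^{-1/(p-1)}$ into the desired lower bound on the energy. With this replacement, the rest of your argument is correct and proves \eqref{eq2.2}: the truncation, Sard applied to the original smooth $u$, the bound $-\mu'\ge\int_{\{u=t\}}|\nabla u|^{-1}$, and the treatment of the singular part of $\mu$.
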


 \begin{proof}[Proof of Theorem 1.6]
 If $|M|<\infty,$ then ${\rm Cap}_p(\Omega)=0$ for all compact domain $\Omega$ and $p>1$ ($M$ is called $p-$parabolic). This is obvious due to \eqref{eq2.1} and hence we will always assume that the volume of $M$ is infinity.
   \begin{itemize}
   \item [(1)] {\bf Step 1: $\mathcal{C}(\Omega)\geq\Lambda(M)=\mathcal M$.}
   \par Recall the definition of the Maz'ya constant in \cite{maz2013sobolev} or (19) and (20) in \cite{grigor1999isoperimetric}, for any bounded domain $O$ and $p>1,$
   $$
   \lambda_{1,p}(O)\leq m_p(O):=\inf\limits_{F\subset\subset O}\frac{{\rm Cap}_p(F,O)}{|F|}.
   $$Hence for any bounded compact domain $\Omega$ in $O,$
   $$
  {\rm Cap}_p(\Omega,O)\geq|\Omega|\lambda_{1,p}(O).
   $$ Let $O\rightarrow M$ and $p\rightarrow\infty,$ we derive that
   $$
   \mathcal{C}(\Omega)=\limsup\limits_{p\rightarrow\infty} p\cdot\Big({\rm Cap}_p(\Omega)^{\frac 1p}\Big)\geq\lim\limits_{p\rightarrow\infty}|\Omega|^{\frac{1}{p}}
    p\Big(\lambda_{1,p}(M)\Big)^{\frac 1p}=\Lambda(M).
   $$

On the other hand, Grigor’yan [Equ.(20) in \cite{grigor1999isoperimetric}] proved that
 \begin{equation}\label{eq2.3}
m_p(O)\geq \lambda_{1,p}(O)\geq \frac{(p-1)^{p-1}}{p^{p}}m_p(O)
\end{equation}
which clearly implies that
$$
\Lambda(M)=\mathcal M.
$$
{\bf Step 2: $\mathcal V(M)\geq \mathcal C(\Omega)$.}
\par In the following, we use $S(t)$ and $V(t)$ to denote the area of $\partial B(o,t)$ and the volume $B(o,t)$ respectively. The case $\mathcal V(M)=\infty$ is trivial and if $\mathcal V(M)=0,$ then the proof is similar with the following.  Without loss of generality, we can assume that
$$
\mathcal V(M)=\limsup_{t\to \infty} \frac{\ln V(t)}{t}=1
$$
by a scaling. Then we only need to show that
$$
\limsup_{p\to \infty}p \Big({\rm Cap}_p(\Omega)\Big)^{1/p}\le 1.
$$
\par
From the hypothesis, for any $a>1$ there exists a $T>0$ such that for all $t>T$,
$$
V(t) < e^{a t}.
$$
For any bounded domain $\Omega$ in $M,$ we select a point $o\in M$ and a big $R>T$ such that $\Omega\subset B(0,R)$ and hence
$$
{\rm Cap}_p(\Omega)\le {\rm Cap}(B(0,R))\le
\Bigl(\int_R^\infty S(t)^{\frac{1}{1-p}}\,dt\Bigr)^{{1-p}}.
$$
Let us first prove that
$$
\limsup_{p\to \infty}(p-1) \Big({\rm Cap}_p(\Omega)\Big)^{\frac{1}{p-1}}\leq 1.
$$
Notice that
\begin{align*}
\limsup_{p\to \infty}(p-1) \Big({\rm Cap}_p(\Omega)\Big)^{\frac{1}{p-1}}
& \le \limsup_{p\to \infty} (p-1)\Bigl(\int_R^\infty S(t)^{\frac{1}{1-p}}\,dt\Bigr)^{-1}
\\ & =\Bigl(\liminf_{y\to 0^+}\int_R^\infty y\,S(t)^{-y}\,dt  \Bigr)^{-1}.
\end{align*}
Here we use the change of variables $y=\frac{1}{1-p}.$ Hence
we only need to show that
\begin{equation}\label{eq2.4}
  \liminf_{y\to0^+} \int_R^\infty y\,S(t)^{-y}\,dt \ge 1.
\end{equation}

Let $s=\ln V(t)$ be the smooth and increasing function and denote the inverse function by $t=F(s),$ which is also smooth and increasing. Furthermore,
$$
F'(s)=\frac{1}{(\ln V(t))'}=\frac{V(t)}{S(t)}=\frac{e^s}{S(t)}
$$
Then
\begin{equation*}
\begin{aligned}
\int_R^\infty y\,S(t)^{-y}\,dt
=& \int_{\ln V(R)}^\infty y\cdot\Big(\frac{F'(s)}{e^s}\Big)^y F'(s)\,ds\\
=&\int_{\ln V(R)}^\infty y e^{-ys}\big(F'(s)\big)^{y+1}\,ds.
\end{aligned}
\end{equation*}

Notice that
$$\int_{\ln V(R)}^\infty y\,e^{-ys}\,ds=V(R)^{-y}$$
and let $d\mu=\frac{ye^{-ys}ds}{V(R)^{-y}}$ be the probability measure.
Hence, by Jensen inequality, we have
$$
\begin{aligned}
\int_{\ln V(R)}^\infty y\,F'(s)^{1+y}e^{-ys}\,ds &=V(R)^{-y}\int_{\ln V(R)}^\infty F'(s)^{1+y}d\mu
\\&\geq V(R)^{-y}\left(\int_{\ln V(R)}^\infty F'(v)d\mu\right)^{y+1}\\
&= V(R)^{y^2}\left(\int_{\ln V(R)}^\infty y\,F'(v)e^{-yv}\,dv\right)^{y+1}.
\end{aligned}
$$
On the other hand, notice that
$$
V(t) < e^{a t}\Rightarrow F(s)\geq s/a,
$$
and hence
$$
\begin{aligned}
\int_{\ln V(R)}^\infty y\,F'(s)e^{-ys}\,ds& =ye^{-ys}F(s)|_{\ln V(R)}^{\infty}+\int_{\ln V(R)}^\infty \,y^2F(s)e^{-ys}\,ds\\
&\geq-yV(R)^{-y}F(\ln V(R))+\int_{\ln V(R)}^\infty y^2\frac{s}a e^{-ys}\,ds\\
& =O(y)+\frac1a(1+y\ln V(R))V(R)^{-y}.
\end{aligned}
$$

Let $y\to0^+$ and $a\to 1$, then \eqref{eq2.4} is proved and hence
$$
\limsup_{p \to \infty} b_p \leq 1,
$$
where $b_p = (p-1)\Big({\rm cap}_p(\Omega)\Big)^{\frac{1}{p-1}}$. In the end,
$$
a_p = p \cdot \Big({\rm cap}_p(\Omega)\Big)^{1/p} = \frac{p}{p-1} \cdot (p-1)^{\frac{1}{p}} \cdot b_p^{1-\frac{1}{p}}.
$$

For any $\varepsilon > 0$, there exists $N$ such that for all $p > N$, $b_p < 1 + \varepsilon$. Then
$$
a_p \leq \frac{p}{p-1} \cdot (p-1)^{\frac{1}{p}} \cdot (1+ \varepsilon)^{1-\frac{1}{p}}.
$$

Taking the upper limit, we obtain
$$
\limsup_{p \to \infty} a_p \leq 1 + \varepsilon.
$$

Since $\varepsilon$ is arbitrary, it follows that $\limsup\limits_{p \to \infty} a_p \leq  1.$
Therefore, we obtain the assertion (1) in Theorem 1.6.
     \item [(2)] {\bf Step 1: $\mathcal{C}(\Omega)$ is independent of $\Omega.$}
     \par If $M$ obeys the isoperimetry of $o$-centered balls, then the isoperimetric function satisfies
     $$ I(|B(o,t)|)=|\partial B(o,t)|.$$
     Let $\Omega$ be any bounded domain with $|\Omega|=|B(o,r_1)|$ for some $r_1>0,$ then
      \begin{equation}\label{eq2.5}\begin{aligned}
      {\rm Cap}_p(\Omega)&\geq\Big(\int_{|\Omega|}^{|M|} \big[I(\tau)\big]^{\frac{p}{1-p}}\,d\tau\Big)^{1-p}
        \\ &= \Big(\int_{r_1}^{\infty} \big[I(|B(o,t)|)\big]^{\frac{p}{1-p}}\frac{d |B(o,t)|}{dt}\,dt\Big)^{1-p}
        \\&=\Big(\int_{r_1}^{\infty} |\partial B(o,t)|^{\frac{1}{1-p}}\,dt\Big)^{1-p}
      \end{aligned}
      \end{equation}
      and the equality holds if $\Omega= B(o,r_1).$
      \par    On the other hand, there exists a $r_2>0$ such that $\Omega\subseteq B(o,r_2),$ and hence
    $${\rm Cap}_p(\Omega)\leq {\rm Cap}_p(B(o,r_2))\leq \Big(\int_{r_2}^{\infty} |\partial B(o,t)|^{\frac{1}{1-p}}\,dt\Big)^{1-p}$$
    Let $p\rightarrow\infty$ and we deduce that
    \begin{equation}\label{eq2.6}
      G(r_1)\leq\mathcal{C}(\Omega)\leq G(r_2)
    \end{equation}
     where
    $$
    G(r)=\limsup\limits_{p\rightarrow\infty}p\cdot\Big(\int_{r}^{\infty} |\partial B(o,t)|^{\frac{1}{1-p}}\,dt\Big)^{\frac{1-p}{p}}.$$
    \par We claim that \eqref{eq2.6} is also true if $(M,g)$ is a rotationally symmetric manifold and $\Omega$ is a bounded domain containing the center point $o.$ In fact, there exists $r_1<r_2$ such that $$B(o,r_1)\subseteq\Omega\subseteq B(o,r_2)
         $$ and hence $\forall p>1,$
         \begin{align*}
         & {\rm Cap}_p(B(o,r_1))\leq{\rm Cap}_p(\Omega)\leq{\rm Cap}_p(B(o,r_2))
         \\ &\Rightarrow \mathcal{C}(B(o,r_1))\leq\mathcal{C}(\Omega)\leq\mathcal{C}(B(o,r_2)).
         \end{align*}
         Notice that the $p-$potential of ball $B(o,r)$ is
         $$
         u(\cdot)=\frac{\int_{\rm dist_g(o,\cdot)}^\infty \varphi(s)^{\frac{n}{1-p}}\,ds}{\int_{r}^\infty \varphi(s)^{\frac{n}{1-p}}\,ds}
         $$
         where ${\rm dim}\ M=n+1$ and then
         $$
         {\rm Cap}_p(B(o,r))=\Big(\int_{r}^{\infty} \big[\omega_n\varphi(t)^n\big]^{\frac{1}{1-p}}\,dt\Big)^{1-p}=\Big(\int_{r}^{\infty} \big|\partial B(o,t)|^{\frac{1}{1-p}}\,dt\Big)^{1-p}$$
         and hence \eqref{eq2.6} still holds.
         \par We only need to prove that $G(r_1)=G(r_2).$
    Without loss of generality, we assume that $0<G(r_1)\leq G(r_2)<\infty.$ Then
    \begin{align*}
      1\leq \frac{G(r_2)}{G(r_1)}&\leq\limsup\limits_{p\rightarrow\infty}\frac{\Big(\int_{r_2}^{\infty} |\partial B(o,t)|^{\frac{1}{1-p}}\,dt\Big)^{\frac{1-p}{p}}}{\Big(\int_{r_1}^{\infty} |\partial B(o,t)|^{\frac{1}{1-p}}\,dt\Big)^{\frac{1-p}{p}}}
      \\ &=\Big(\lim\limits_{p\rightarrow\infty} \frac{\int_{r_2}^{\infty} |\partial B(o,t)|^{\frac{1}{1-p}}\,dt}{\int_{r_1}^{\infty} |\partial B(o,t)|^{\frac{1}{1-p}}\,dt}\Big)^{-1}
      \\&=1+\lim\limits_{p\rightarrow\infty}\frac{\int_{r_1}^{r_2} |\partial B(o,t)|^{\frac{1}{1-p}}\,dt}{\int_{r_2}^{\infty} |\partial B(o,t)|^{\frac{1}{1-p}}\,dt}
      \\&=1.
      \end{align*}
      Then $G(r)$ is a constant function of $r$ and hence $\mathcal{C}(\Omega)=G(r)$ is also a constant independent of $\Omega.$
      \\~\\
    \par {\bf Step 2: The equality case}
    \par If in addition, \eqref{eq1.2} holds, then$\lim\limits_{t\to \infty}\frac{|\partial B(0,t)|}{|B(o,t)|}$ exists. According to the L'Hospital's rule,
    $$\lim_{t\to \infty}\frac{\ln|B(o,t)|}{t}=\lim\limits_{t\to \infty}\frac{|\partial B(0,t)|}{|B(o,t)|}
    $$ also exists, and hence the limit is exactly
    the volume entropy $\mathcal{V}(M).$
  \par If $\mathcal{V}(M)=0,$ then \eqref{eq1.3} is trivial. In the following, we assume that $\mathcal{V}(M)>0.$
  \par In the first case where $(M,g)$ obeys the isoperimetry of $o$-centered balls, the Maz'ya constant can be estimated via \eqref{eq2.5} as
\begin{equation*}
\begin{aligned}
  m_p(M):=&\inf\limits_{F\subset\subset M}\frac{{\rm Cap}_p(F)}{|F|}\\
  =&\inf\limits_{r>0}\frac{{\rm Cap}_p(B(o,r))}{|B(o,r)|}=\inf\limits_{r>0}\frac{\Big(\int_{r}^{\infty} S(t)^{\frac{1}{1-p}}\,dt\Big)^{1-p}}{V(r)}.
\end{aligned}
\end{equation*}
  Then from \eqref{eq2.3}, we obtain
  \begin{equation}\label{eq2.7}
 \lambda_{1,p}(M)\geq \frac{(p-1)^{p-1}}{p^{p}}\cdot\inf\limits_{r>0}\frac{\Big(\int_{r}^{\infty} S(t)^{\frac{1}{1-p}}\,dt\Big)^{1-p}}{V(r)}.
\end{equation}
      \par In the second case, where $(M,g)$ is a rotationally symmetric manifold, we now show that \eqref{eq2.7} remains valid. Recall the derivation of \eqref{eq2.3} in \cite{grigor1999isoperimetric}: for any Lipschitz function $u$ on $M$ with compact support and $p>1,$
      $$
      \int_M |\nabla u|^p\,d\mu_g\geq \frac{(p-1)^{p-1}}{p^{p}}\int_0^\infty {\rm Cap}_p(U_t)\,d(t^p),
      $$ $$
      \int_M |u|^p\,d\mu_g\leq\int_0^\infty |U_t|\,d(t^p),
      $$
    where
    $U_t=\{x\in M:\ \ u(x)\geq t\}.$
    Consequently,
    \begin{equation*}
\begin{aligned}
    \frac{\int_M |\nabla u|^p\,d\mu_g}{\int_M |u|^p\,d\mu_g}\geq& \frac{(p-1)^{p-1}}{p^{p}}\frac{\int_0^\infty {\rm Cap}_p(U_t)\,d(t^p)}{\int_M |u|^p\,d\mu_g}\\
    \geq& \frac{(p-1)^{p-1}}{p^{p}} \inf\limits_{F\subset\subset M}\frac{{\rm Cap}_p(F)}{|F|}.
\end{aligned}
\end{equation*}
    Now, on a rotationally symmetric manifold, we may choose a radial function $u$ as a test function to estimate the first $p-$eigenvalue. By restricting to functions depending only on the distance $r$ from the pole $o,$ the level sets $U_t$ become concentric $o-$centered balls. It implies that the infimum over all compact subsets $F$ in the estimate above can be replaced by the infimum over $o-$centered balls. Hence the same inequality \eqref{eq2.7} holds in the rotationally symmetric setting as well.
    \par We now set
    $$f(r):=\frac{\Big(\int_{r}^{\infty} S(t)^{\frac{1}{1-p}}\,dt\Big)^{1-p}}{V(r)},\ \ r>0
    $$ and try to find the infimum of $f(r).$ A direct calculation indicates that
    \begin{align*}
    \frac{d}{dr} \big(f(r)\big)^{\frac{1}{1-p}}&= V(r)^{\frac{1}{p-1}-1}S(r)\left(\frac{1}{p-1}\int_r^\infty S(t)^{-\frac{1}{p-1}}\,dt-V(r)S(r)^{-\frac{1}{p-1}-1}\right)
    \\&:=V(r)^{\frac{1}{p-1}-1}S(r)g(r)
    \end{align*}
    Recall that
   \begin{equation}\label{eq2.8}
     \lim\limits_{r\to\infty}\frac{\ln V(r)}{r}=\lim\limits_{r\to\infty}\frac{S(r)}{V(r)}=\mathcal{V}(M)>0,
   \end{equation}
    then $V(r)\sim S(r)\sim e^{\mathcal{V}(M)}$ as $r\rightarrow\infty$ and hence $g(\infty)=\lim\limits_{r\to\infty}g(r)=0.$ Moreover,
    \begin{align*}
       g'(r)&= -\frac{1}{p-1}  S(r)^{-\frac{1}{p-1}}-S(r)^{-\frac{1}{p-1}}+\Big(\frac{1}{p-1}+1\Big)V(r)S(r)^{-\frac{1}{p-1}-2}S'(r)\\
        & =\Big(\frac{1}{p-1}+1\Big)S(r)^{-\frac{1}{p-1}-2}\left(V(r) S'(r)-S(r)^2\right) \\
        & =\Big(\frac{1}{p-1}+1\Big)S(r)^{-\frac{1}{p-1}-2}V(r)^2\cdot\frac{d}{dr}\left(\frac{S(r)}{V(r)}\right)
    \end{align*}
    By \eqref{eq1.2}, there exists a $R_0>0$ such that $\frac{S(r)}{V(r)}$ is decreasing on $[R_0,\infty).$ Then $g(r)$ is also decreasing on $[R_0,\infty)$ and hence $g\geq g(\infty)=0$ on $[R_0,\infty).$ As a consequence, $f(r)$ is decreasing on $[R_0,\infty).$ Notice that
    $$
    \lim_{r\to\infty}f(r)= \lim_{r\to\infty}\left(\frac{\int_{r}^{\infty} S(t)^{\frac{1}{1-p}}\,dt}{V(r)^{\frac{1}{1-p}}}\right)^{1-p}
    =\frac{\mathcal{V}(M)^p}{(p-1)^{p-1}}.
    $$
    and
    $$
    f(r)\geq \frac{\Big(\int_{0}^{\infty} S(t)^{\frac{1}{1-p}}\,dt\Big)^{1-p}}{V(R_0)}:=c_p,\ \ \ \ \forall r\in (0,R_0].
    $$
    Then
    \begin{equation}\label{eq2.9}
   \inf_{r>0} f(r)\geq\min\{c_p,\ \frac{\mathcal{V}(M)^p}{(p-1)^{p-1}}\}.
    \end{equation}

    Combining \eqref{eq2.7} and \eqref{eq2.9}, we have
    \begin{equation}\label{eq2.10}
    \begin{aligned}
      \Lambda(M)&=\lim_{p\to\infty}p\cdot\Big(\lambda_{1,p}(M)\Big)^{\frac1p} \\
      &\geq \limsup_{p\to\infty}p\Big(\frac{(p-1)^{p-1}}{p^{p}}\Big)^{\frac1p}\cdot\min\bigg\{(c_p)^{\frac1p},\ \Big(\frac{\mathcal{V}(M)^p}{(p-1)^{p-1}}\Big)^{\frac1p}\bigg\}
     \\ & =\min\{\limsup_{p\to\infty}(p-1)(c_p)^{\frac1p},\ \mathcal{V}(M)\}.
    \end{aligned}
    \end{equation}
    We can assume that $c_p<1$ (otherwise $\limsup\limits_{p\to\infty}(p-1)(c_p)^{\frac1p}=\infty$) and then
\begin{equation*}
\begin{aligned}
    \limsup_{p\to\infty}(p-1)(c_p)^{\frac1p} \geq& \limsup_{p\to\infty}(p-1)(c_p)^{\frac1{p-1}}\\
    =&\limsup_{p\to\infty}(p-1)\Big(\int_{0}^{\infty} S(t)^{\frac{1}{1-p}}\,dt\Big)^{-1}.
\end{aligned}
\end{equation*}
   From \eqref{eq2.8}, for any $\varepsilon>0,$ there exists a $T_0>R_0>0,$ such that
   $$
   \forall t>T_0,\ \ S(t)\geq \mathcal{V}(M) \cdot V(t)\geq \mathcal{V}(M) \cdot e^{(\mathcal{V}(M) -\varepsilon)t}.
   $$
  Then
   \begin{align*}
   \limsup_{p\to\infty}(p-1)(c_p)^{\frac1p}&\geq\Big(\liminf_{p\to\infty}\frac{1}{p-1}\int_{0}^{\infty} S(t)^{\frac{1}{1-p}}\,dt\Big)^{-1}
   \\ &=\left(\liminf_{p\to\infty}\frac{1}{p-1}\Big(\int_{0}^{T_0} S(t)^{\frac{1}{1-p}}\,dt+\int_{T_0}^{\infty} S(t)^{\frac{1}{1-p}}\,dt\Big)\right)^{-1}
   \\& \geq \left(\liminf_{p\to\infty}\frac{1}{p-1}\int_{T_0}^{\infty}\Big(\mathcal{V}(M) \cdot e^{(\mathcal{V}(M) -\varepsilon)t}\Big)^{\frac{1}{1-p}}\,dt\right)^{-1}
   \\&=\mathcal{V}(M) -\varepsilon
   \end{align*}
   Let $\varepsilon\rightarrow 0$ and \eqref{eq2.10} implies that
   $$ \Lambda(M)\geq \mathcal{V}(M).$$
   In the end, we obtain \eqref{eq1.3} due to \eqref{eq1.1}.
  \end{itemize}
     \end{proof}

     \begin{proof}[Proof of Theorem \ref{thm1.9}]
      \begin{itemize}
     \item [(1)] If ${\rm Ric}\geq 0,$ then by the Bishop-Gromov theorem, $$
     |\partial B(o,t)|\leq \omega_n t^n,\ \ \forall o\in M,\ t>0.$$
     Then ${\rm Cap}_p(B(o,r))=0$ for all $r>0$ and $p\geq n+1.$ Namely, $M$ is $p-$parabolic and hence $\mathcal{C}(\Omega)\equiv 0.$
   \item [(2)] By the Bishop--Gromov volume comparison theorem, for any complete Riemannian manifold of dimension $n+1$ with $\operatorname{Ric}\ge -n,$ the function
$$
r\;\longmapsto\; \frac{\operatorname{Vol}(B_o(r))}{V_\mathbb{H}(r)}
$$
is nonincreasing, where $V_\mathbb{H}(r)$ denotes the volume of a ball of radius $r$ in hyperbolic space.
 So there exists a constant $C>0$ such that
$$
\operatorname{Vol}(B_o(r)) \le C\,V_\mathbb{H}(r) \qquad\forall\,r>0.
$$
It is well-known that
$$
V_\mathbb{H}(r) \sim \frac{\omega_n}{2^n}\,e^{nr}\quad\text{as } r\to\infty,
$$
where $\omega_n$ is the volume of the unit $n$-sphere. Consequently,
$$
\limsup_{r\to\infty}\frac{\ln\operatorname{Vol}(B_o(r))}{r}
\le \limsup_{r\to\infty}\frac{\ln\bigl(C\,V_\mathbb{H}(r)\bigr)}{r}=n.
$$
Thus $\mathcal V(M)\le n$, completing the proof due to \eqref{eq1.1}.
     \item [(3)] According to Theorem \ref{thm1.6} (1) and Theorem \ref{thm1.9} (2), we have that
     $$
     \Lambda(M)\leq\mathcal{C}(\Omega)\le \mathcal V(M)\leq n,\ \ \forall\Omega\subseteq M.
     $$
     Recall the estimates of $p-$eigenvalue in \cite [Corollary 1.3]{jin2025lower}), under the conditions of (3), we obtain
     $\lambda_{1,p}(\Omega_k)\geq\big(\frac{n}{p}\big)^p$ for any $\Omega_k$ and $p>1.$ Let $\Omega_k\rightarrow M$ and $p\rightarrow\infty$ and then
     $\Lambda(M)=n.$
     \item [(4)]We will first introduce some basic materials  regarding conformally compact Einstein (also known as Poincar\'e Einstein) manifolds. Suppose that $\overline{M}$ is $n+1-$dimensional manifold with smooth boundary $\partial M$ of dimension $n$ and $M$ is its interior. A complete noncompact metric $g$ in $M$ is called conformally compact if there exists a defining function $\rho$ such that
         $$
         \rho>0\ \ \text{in} \ \  M, \ \ \ \ \rho=0\ \ \text{on} \ \partial M,\ \ \ \ \ d\rho\neq 0 \ \ \text{on} \  \ \partial M
         $$ and the conformal metric $\bar{g}=\rho^2g$ extends to a $C^2$ metric on $\overline{M}.$ The restriction $\hat{g}=\bar{g}|_{T\partial M}$ defines a boundary metric associated with the compactification $\bar{g}.$  Consequently, $(M,g)$ induces a conformal structure $(\partial M,[\hat{g}])$ on the boundary, called the conformal infinity of $(M,g).$
         If, in addition, $g$ is Einstein, i.e. ${\rm Ric}[g]=-ng,$ then $(M,g)$ is a conformally compact Einstein manifold. A direct computation shows that $|d\rho|^2_{\rho^2g}|_{\partial M}=1,$ and $K[g]=-1+O(\rho^2).$ Hence $(M,g)$ is asymptotically hyperbolic and serves as a natural extension of hyperbolic space.
         \par A classical result due to Lee \cite{lee1995spectrum} establishes a fundamental connection between the Yamabe constant of the conformal infinity and the spectrum of the interior Laplacian on a conformally compact Einstein manifold, i.e.
         $$
         Y(\partial M,[\hat{g}])\geq 0\Rightarrow \lambda_{1,2}(M)=\frac{n^2}{4}.
         $$
         The result was later extended to the general $p-$case in \cite{hijazi2020cheeger} and \cite{jin2025lower}:
         $$
         Y(\partial M,[\hat{g}])\geq 0\Rightarrow \lambda_{1,p}(M)=\frac{n^p}{p^p},\ \ \forall p>1.
         $$ Hence we get that $\Lambda(M)=n$ when $Y(\partial M,[\hat{g}])\geq 0$ and finish the proof.
   \end{itemize}
 \end{proof}

\section {Relation to topological entropy}

\begin{proof}[Proof of Theorem \ref{thm:1.10}.]

Let $M$ be the universal cover of $N$ and
$$S(t)=|\partial B(o,t)|\ \ \&\ \ V(t)=|B(o,t)|.$$
  Knieper \cite{Knieper1997} proved that there exist constants $a>1$, $r_0>0$ and $\alpha=({\rm rank} M-1)/2\ge 0$ such that for all $t\ge r_0$,
\begin{equation}\label{eq3.1}
\frac{1}{a}\,t^\alpha e^{\mathcal{V}(M)t}\le S(t)\le a\,t^\alpha e^{\mathcal{V}(M)t}.
\end{equation}
If $M$ is flat, the assertion is obvious and are equal to zero.  Next, we consider that $M$ is non-flat and assume that $\mathcal{V}(M)>0.$
Under either hypothesis (A) or (B), according to \eqref{eq2.7} in Section 2,
\begin{equation}\label{eq3.2}
\lambda_{1,p}(M)\ge \frac{(p-1)^{p-1}}{p^{p}}\;\inf_{r>0}\frac{{\rm Cap}_p(B(o,r))}{V(r)}.
\end{equation}

If $r\geq r_0,$ then by the lower bound for $S(t)$ in \eqref{eq3.1}, we obtain

\begin{equation}\label{eq3.3}
\begin{aligned}
{\rm Cap}_p(B(o,r))& = \left(\int_r^\infty S(t)^{\frac{1}{1-p}}dt\right)^{1-p}
\\& \ge \left(\int_r^\infty\Big(\frac{1}{a}\,t^\alpha e^{\mathcal{V}(M)t}\Big)^{\frac{1}{1-p}}\,dt\right)^{1-p}
\\ & \geq \left(\int_r^\infty\Big(\frac{1}{a}\,r^\alpha e^{\mathcal{V}(M)t}\Big)^{\frac{1}{1-p}}\,dt\right)^{1-p}
\\ & =\frac{r^\alpha}{a}\Big(\frac{p-1}{\mathcal{V}(M)}\Big)^{1-p}e^{\mathcal{V}(M)r}.
\end{aligned}
\end{equation}
On the other hand
\begin{equation}\label{eq3.4}
V(r)\le V(r_0)+\int_{r_0}^ra\,t^\alpha e^{\mathcal{V}(M)t}\,dt\leq Cr^\alpha e^{\mathcal{V}(M)r}
\end{equation}
where $C$ only depends on $a,r_0, V(r_0)$ and $\mathcal{V}(M).$
Combining \eqref{eq3.3} and \eqref{eq3.4} gives
$$
\frac{{\rm Cap}_p(B(o,r))}{V(r)}\ge \frac{1}{aC}\Big(\frac{p-1}{\mathcal{V}(M)}\Big)^{1-p}$$
The right-hand side is independent of $r$; therefore the infimum over $r$ satisfies the same estimate:
\begin{equation}\label{eq3.5}
\inf_{r>0}\frac{{\rm Cap}_p(B(o,r))}{V(r)}\ge \min\{c_p,\ \frac{1}{aC}\Big(\frac{p-1}{\mathcal{V}(M)}\Big)^{1-p}\},
\end{equation}
where
$$
   c_p:= \frac{\Big(\int_{0}^{\infty} S(t)^{\frac{1}{1-p}}\,dt\Big)^{1-p}}{V(r_0)}
    $$

Plugging \eqref{eq3.5} into \eqref{eq3.2} yields
\begin{equation}
    \begin{aligned}
      \Lambda(M)&=\lim_{p\to\infty}p\cdot\Big(\lambda_{1,p}(M)\Big)^{\frac1p} \\
      &\geq \limsup_{p\to\infty}p\Big(\frac{(p-1)^{p-1}}{p^{p}}\Big)^{\frac1p}\cdot\min\bigg\{(c_p)^{\frac1p},\ \Big(\frac{1}{aC}\Big(\frac{p-1}{\mathcal{V}(M)}\Big)^{1-p}\Big)^{\frac1p}\bigg\}
     \\ & =\min\{\limsup_{p\to\infty}(p-1)(c_p)^{\frac1p},\ \mathcal{V}(M)\}.
    \end{aligned}
    \end{equation}
Similar to the proof in Section 2, we obtain that
   $$ \Lambda(M)\geq \mathcal{V}(M).$$
  From Theorem 1.6(1), we have the general inequalities
$$
 \mathcal V(M)=\mathcal{C}(\Omega)= \Lambda(M)= \mathcal M(M).
$$

In the end, Manning \cite{Ma1979} proved that if the sectional curvatures of $N$ is non-positive, then $h_{\rm top}(\phi)=\mathcal V(M)$.

Thus we prove the assertion.

\end{proof}

\begin{proof}[Proof of Corollary \ref{thm:1.11}.]
Suppose that $M$ is universal cover of $N$, and the projection $\pi: M \to N$ is a local isometry, so the Ricci curvature of $M$ satisfies the same lower bound:
$$
\mathrm{Ric}_M \ge -n\,g_M.
$$

It follows from Theorem 1.9 that $\mathcal{V}(M)\leq n.$

On the other hand, since $\lambda_{1,p}\geq\Big(\frac{n}p\Big)^p$, we have  $\mathcal{V}(M)\geq\Lambda(M)\geq n$.
Therefore $\mathcal{V}(M)=n$.

The Ledrappier–Wang theorem (compact version) states \cite{LW2010}:
\begin{quote}
Let $N$ be an $m$-dimensional closed Riemannian manifold with \(\mathrm{Ric}_N \ge -(m-1)g_N\). If its volume entropy is $m-1$, then $N$ is isometric to a compact hyperbolic manifold (constant curvature $-1.$)
\end{quote}
Therefore \(N\) is isometric to a compact hyperbolic manifold.
Then, the universal cover of a compact hyperbolic manifold is the hyperbolic space \(\mathbb{H}^{n+1}\). Consequently \(M \cong \mathbb{H}^{n+1}\). This completes the proof.
\end{proof}

```latex
\section{Proof of the second-order refinement}
\label{sec:proof-second-order-refinement}

We prove Theorem~\ref{thm:second-order-refinement-110}.  The proof uses
two ingredients.  The first one is the finite-$p$ comparison already used
in the proof of Theorem~\ref{thm1.6}.  The second one is a simple
Laplace-type estimate.

Let
\[
        S(r)=|\partial B(o,r)|,
        \qquad
        I_p(R)=\int_R^\infty S(t)^{\frac{1}{1-p}}\,dt.
\]

\begin{lemma}
\label{lem:finite-p-squeezing}
Assume that $(M,g,\Omega,o)$ satisfies one of the alternatives
{\rm (A)} and {\rm (B)} in Theorem~\ref{thm1.6}(2).  Then there exist
radii $R_-,R_+>0$, independent of $p$, such that
\[
        \frac{p-1}{I_p(R_-)}
        \leq
        (p-1){\rm Cap}_p(\Omega)^{\frac{1}{p-1}}
        \leq
        \frac{p-1}{I_p(R_+)}.
\]
\end{lemma}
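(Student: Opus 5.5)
The plan is to reuse the two-sided comparison from Step~1 of the proof of Theorem~\ref{thm1.6}(2). Then I raise everything to the power $\frac{1}{p-1}$. The key observation is that the map $x\mapsto x^{\frac{1}{p-1}}$ is increasing on $(0,\infty)$. So any inequality of the form
\[
I_p(R_-)^{1-p}\le {\rm Cap}_p(\Omega)\le I_p(R_+)^{1-p}
\]
immediately gives
\[
I_p(R_-)^{-1}\le {\rm Cap}_p(\Omega)^{\frac1{p-1}}\le I_p(R_+)^{-1}.
\]
Multiplying by $p-1$ then yields the lemma. Thus it suffices to produce radii $R_\pm$, independent of $p$, for which these capacity bounds hold for every $p>1$.

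\emph{Upper bound.} Since $\Omega$ is compact, pick $R_+>0$ with $\Omega\subseteq B(o,R_+)$. Monotonicity of capacity under inclusion gives ${\rm Cap}_p(\Omega)\le {\rm Cap}_p(B(o,R_+))$. The flux estimate \eqref{eq2.1} then gives ${\rm Cap}_p(B(o,R_+))\le I_p(R_+)^{1-p}$. This works in both alternatives (A) and (B), and $R_+$ depends only on $\Omega$ and $o$.

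\emph{Lower bound.} I split into the two cases.
\begin{itemize}
\item In case (A), choose $R_->0$ with $|B(o,R_-)|=|\Omega|$. This is possible because $r\mapsto|B(o,r)|$ is continuous, increasing from $0$, and we may assume $|M|=\infty$ (otherwise everything vanishes, as noted in the proof of Theorem~\ref{thm1.6}). Under (A) the isoperimetric profile satisfies $I(|B(o,t)|)=S(t)$. Applying \eqref{eq2.2} with the substitution $\tau=|B(o,t)|$, exactly as in \eqref{eq2.5}, gives ${\rm Cap}_p(\Omega)\ge I_p(R_-)^{1-p}$.
\item In case (B), $\Omega$ contains the pole $o$ in its interior, so there is $R_->0$ with $B(o,R_-)\subseteq\Omega$. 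Then ${\rm Cap}_p(\Omega)\ge {\rm Cap}_p(B(o,R_-))$. For the rotationally symmetric metric, the explicit radial $p$-potential computed in the proof of Theorem~\ref{thm1.6} gives ${\rm Cap}_p(B(o,R_-))=I_p(R_-)^{1-p}$ exactly.
\end{itemize}
In both cases $R_-$ depends only on $\Omega$, $o$ and $g$, not on $p$.

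\emph{Main obstacle.} The only delicate point is ensuring the quantities are finite and positive, so that the powers and reciprocals make sense. Specifically, we need $0<I_p(R)<\infty$. Positivity is clear since $S>0$. If $I_p(R)=\infty$, the corresponding capacity bound is $0$ and the inequality holds trivially with the convention $1/\infty=0$. I would state this convention, or assume non-$p$-parabolicity, which under the standing hypotheses $\mathcal V(M)>0$ of Section~\ref{sec:proof-second-order-refinement} holds automatically for large $p$. Beyond this, the argument is a direct restatement of the Section~2 comparison.
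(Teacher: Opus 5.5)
Your proof is correct and follows the paper's argument: the paper just cites the finite-$p$ sandwich ${\rm Cap}_p(B(o,R_-))\le{\rm Cap}_p(\Omega)\le{\rm Cap}_p(B(o,R_+))$ from Step~1 of Theorem~\ref{thm1.6}(2), together with ${\rm Cap}_p(B(o,R))=I_p(R)^{1-p}$. You write this out in more detail, and you correctly use the isoperimetric bound \eqref{eq2.5} rather than an inclusion $B(o,R_-)\subseteq\Omega$ in case~(A).

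One minor caveat: your aside that $\mathcal V(M)>0$ by itself rules out $p$-parabolicity is false. Example~\ref{ex3.1} satisfies (B) and has $\mathcal V(M)=1$, yet ${\rm Cap}_p(B(o,1))=0$ for $p>2$. Nothing in your argument depends on this aside, because the convention $1/\infty=0$ already covers that case. In the setting of Theorem~\ref{thm:second-order-refinement-110}, finiteness of $I_p(R)$ comes instead from Knieper's exponential lower bound on $S$.
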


\begin{proof}
This is precisely the finite-$p$ comparison appearing in the proof of
Theorem~\ref{thm1.6} as
$$
        {\rm Cap}_p(B(o,R))=I_p(R)^{1-p}.
$$
and there are $R_-,R_+>0$ such that
$$
       {\rm Cap}_p(B(o,R_-))
        \leq
        {\rm Cap}_p(\Omega)
        \leq
       {\rm Cap}_p(B(o,R_+)).
$$
\end{proof}

\begin{lemma}
\label{lem:laplace-estimate}
Assume that, for some $H>0$, $\alpha\geq0$, $C>1$, and $R_0>0$,
\[
        C^{-1}r^\alpha e^{Hr}
        \leq
        S(r)
        \leq
        Cr^\alpha e^{Hr},
        \qquad r\geq R_0.
\]
Then, for every fixed $R>0$,
\[
        \frac{p-1}{H I_p(R)}
        =
        1+\frac{\alpha\ln p}{p}
        +O\!\left(\frac1p\right).
\]
\end{lemma}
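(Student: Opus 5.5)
The plan is a direct Laplace-type computation with the small parameter $\varepsilon:=\frac{1}{p-1}\to0^+$, in which
\[
I_p(R)=\int_R^\infty S(t)^{-\varepsilon}\,dt .
\]
The goal is to show
\[
I_p(R)=(H\varepsilon)^{-1}(H\varepsilon)^{\alpha\varepsilon}\bigl(1+O(\varepsilon)\bigr).
\]
Once this holds, the statement follows: $\frac{p-1}{HI_p(R)}=(H\varepsilon)^{-\alpha\varepsilon}(1+O(\varepsilon))$.

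\emph{Step 1: reduce to the tail from $R_0$.}
Write $I_p(R)=\int_{R_0}^\infty S^{-\varepsilon}\,dt+E_p$.
\begin{itemize}
\item If $R<R_0$, then $E_p=\int_R^{R_0}S^{-\varepsilon}\,dt$. Since $R>0$, $S$ is positive and continuous on $[R,R_0]$, hence bounded above and below there. So $S^{-\varepsilon}$ is bounded uniformly in $p$, and $E_p=O(1)$.
\item If $R>R_0$, then $E_p=-\int_{R_0}^{R}S^{-\varepsilon}\,dt$, again $O(1)$.
\end{itemize}
The main term will be of size $\varepsilon^{-1}$, so $E_p$ only contributes a relative error $O(\varepsilon)$.

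\emph{Step 2: sandwich the tail.}
On $[R_0,\infty)$ the hypothesis gives
\[
C^{-\varepsilon}\,t^{-\alpha\varepsilon}e^{-H\varepsilon t}\le S(t)^{-\varepsilon}\le C^{\varepsilon}\,t^{-\alpha\varepsilon}e^{-H\varepsilon t}.
\]
Since $C^{\pm\varepsilon}=1+O(\varepsilon)$, it suffices to evaluate $J:=\int_{R_0}^\infty t^{-\alpha\varepsilon}e^{-H\varepsilon t}\,dt$. The substitution $u=H\varepsilon t$ gives
\[
J=(H\varepsilon)^{\alpha\varepsilon-1}\int_{H\varepsilon R_0}^\infty u^{-\alpha\varepsilon}e^{-u}\,du .
\]

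\emph{Step 3: the incomplete Gamma function.}
This is the only mildly delicate point: the estimate must be uniform in $\varepsilon$. First, $\Gamma(1-\alpha\varepsilon)=1+O(\varepsilon)$ by smoothness of $\Gamma$ near $1$. Second, the missing piece satisfies
\[
\int_0^{H\varepsilon R_0}u^{-\alpha\varepsilon}e^{-u}\,du\le \frac{(H\varepsilon R_0)^{1-\alpha\varepsilon}}{1-\alpha\varepsilon}=O(\varepsilon),
\]
because $\alpha\varepsilon\le\frac12$ for large $p$. Hence the integral equals $1+O(\varepsilon)$. Combining with Steps 1--2 gives the claimed form of $I_p(R)$, since the relative error from $E_p$ is $O(1)\cdot H\varepsilon\,(H\varepsilon)^{-\alpha\varepsilon}=O(\varepsilon)$, using $(H\varepsilon)^{-\alpha\varepsilon}\to1$.

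\emph{Step 4: expand the exponent.}
We have $(H\varepsilon)^{-\alpha\varepsilon}=\exp\bigl(\alpha\varepsilon\ln\frac{p-1}{H}\bigr)$. Here
\[
\varepsilon\ln\frac{p-1}{H}=\frac{\ln p}{p}+O\!\left(\frac1p\right),
\]
and this quantity tends to $0$. Expanding $e^x=1+x+O(x^2)$ with $x^2=O(\ln^2p/p^2)=O(1/p)$, and multiplying by $1+O(1/p)$, yields
\[
\frac{p-1}{HI_p(R)}=1+\frac{\alpha\ln p}{p}+O\!\left(\frac1p\right).
\]

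I expect Step 3 to be the main obstacle, namely the uniform handling of the incomplete Gamma function near the lower limit. Everything else is bookkeeping of factors $1+O(\varepsilon)$.
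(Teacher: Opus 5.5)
Your proposal is correct and follows essentially the same route as the paper: the substitution $y=1/(p-1)$, reduction of the model integral to an incomplete Gamma function equal to $1+O(y)$, the $C^{\pm y}$ sandwich on the tail, absorbing the compact piece of the integral as a relative $O(y)$ error, and finally expanding $(Hy)^{-\alpha y}$. Your Step~3 is slightly more explicit than the paper's. One small correction there: the bound $(H\varepsilon R_0)^{1-\alpha\varepsilon}=O(\varepsilon)$ follows from $(H\varepsilon R_0)^{-\alpha\varepsilon}\to 1$, not from $\alpha\varepsilon\le\frac12$, which only controls the denominator.
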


\begin{proof}
Put $y=(p-1)^{-1}\rightarrow 0$ as $p\rightarrow\infty.$  We need the asymptotics of
\[
        I_y(R):=\int_R^\infty S(t)^{-y}\,dt.
\]
For the model integral
$$
        J_y(R):=\int_R^\infty (t^{\alpha }e^{Ht})^{-y}\,dt =
        (Hy)^{\alpha y-1}
        \int_{HyR}^{\infty}s^{-\alpha y}e^{-s}\,ds.
$$

Since
\[
        \int_{HyR}^{\infty}s^{-\alpha y}e^{-s}\,ds=1+O(y),
\]
we obtain
\begin{equation}\label{eq4.1}
  \begin{aligned}
  \frac{1}{Hy J_y(R)}&=e^{-\alpha y\ln (Hy)}\big(1+O(y)\big)^{-1}
\\ &=(1-\alpha y\ln (Hy)+O((y\ln y)^2))\big(1+O(y)\big)
\\ &=1-\alpha y\ln y+O(y).
  \end{aligned}
\end{equation}

Now choose $T\geq\max\{R,R_0,1\}$ and hence
$$
I_y(R)=\int_R^T S(t)^{-y}\,dt+I_y(T)=T-R+O(y)+I_y(T)
$$
where
$$
        C^{-y}J_y(T)
        \leq
        I_y(T)
        \leq
        C^yJ_y(T)
$$
would imply that
$$
\frac{1}{Hy I_y(T)}=1-\alpha y\ln y+O(y).
$$
Therefore, we obtain that
$$
\frac{1}{Hy I_y(R)}=1-\alpha y\ln y+O(y).
$$
Then we finish the proof by setting $y=\frac{1}{p-1}.$
\end{proof}

\begin{proof}[Proof of Theorem~\ref{thm:second-order-refinement-110}]
By Theorem~\ref{thm:1.10}, in the compact-quotient setting the volume
entropy is the genuine limit
\[
        \mathcal V(M)=\lim_{r\to\infty}\frac{\ln |B(o,r)|}{r},
\]
and
\[
     \mathcal   V(M)=\mathcal C(\Omega)=\Lambda(M)=\mathcal M(M).
\]

Let $k=\operatorname{rank}M$.  Since $M$ is a nonflat Hadamard manifold
with nonpositive sectional curvature and compact quotient, Knieper's
sphere-growth theorem gives
\[
        S(r)\sim
        r^{(k-1)/2}e^{\mathcal V(M)r}.
\]
Thus Lemma~\ref{lem:laplace-estimate} applies with
\[
        H=\mathcal V(M),
        \qquad
        \alpha=\frac{k-1}{2}.
\]
Hence, for every fixed $R>0$,
\[
        \frac{p-1}{\mathcal V(M) I_p(R)}
        =1+ \frac{k-1}{2}\frac{\ln p}{p}
        +
        O\!\left(\frac1p\right).
\]

By Lemma~\ref{lem:finite-p-squeezing},
\[
        \frac{p-1}{I_p(R_-)}
        \leq
        (p-1){\rm Cap}_p(\Omega)^{\frac{1}{p-1}}
        \leq
        \frac{p-1}{I_p(R_+)}
\]
for some fixed radii $R_-,R_+>0$.  Since both endpoints have the same
second-order expansion, we obtain
\[
        \frac{(p-1){\rm Cap}_p(\Omega)^{\frac{1}{p-1}}}{\mathcal V(M)}
        =1        +\frac{k-1}{2}
        \frac{\ln p}{p}
        +
        O\!\left(\frac1p\right).
\]
Therefore
\[
        \operatorname{rank}M
        =
        1+2\alpha_{\operatorname{cap}}(\Omega).
\]
\end{proof}

\begin{proof}[Proof of Corollary~\ref{cor:second-order-consequences}]
Set
$$
        \Theta_p(\Omega)
        :=
        (p-1){\rm Cap}_p(\Omega)^{\frac{1}{p-1}}=
        \mathcal V(M)
        +
        \mathcal V(M) \left(\frac{\operatorname{rank}M-1}{2}\right)\frac{\ln p}{p}
        +
        O\!\left(\frac1p\right).
$$
Then
we have
\[
        p{\rm Cap}_p(\Omega)^{1/p}
        =
        p
        \left(
        \frac{\Theta_p(\Omega)}{p-1}
        \right)^{(p-1)/p}.
\]
Taking logarithms gives
$$
\begin{aligned}
        \ln\left(
        p{\rm Cap}_p(\Omega)^{\frac1p}
        \right)
        &=  \ln p +   \left(1-\frac1p\right) \left( \ln\Theta_p(\Omega)-\ln(p-1)\right)\\
        &=  \ln\Theta_p(\Omega)+\ln p-\ln(p-1)+\frac{\ln(p-1)}{p}-\frac{\ln\Theta_p(\Omega)}{p}\\
        &=\ln\Theta_p(\Omega)+\frac{\ln p}{p}+O\!\left(\frac1{p}\right).
\end{aligned}
$$

Exponentiating yields
$$
        p{\rm Cap}_p(\Omega)^{\frac1p}
        =\Theta_p(\Omega)e^{\frac{\ln p}{p}+O\!\left(\frac1{p}\right)}=\Theta_p(\Omega)\Big(1+\frac{\ln p}{p}+O\!\left(\frac1{p}\right)\Big).
$$
This becomes
$$
        p{\rm Cap}_p(\Omega)^{\frac1p}=\mathcal V(M) + \mathcal V(M)
        \left(
        \frac{\operatorname{rank}M+1}{2}
        \right)\frac{\ln p}{p}
        +
        O\!\left(\frac1p\right).
$$
Hence \eqref{eq1.8} holds.

\end{proof}

\section{Some examples}
 \begin{example}\label{ex3.1}
        In order to construct a rotationally symmetric manifold $$(M,g)=([0,\infty),\ dt^2+\varphi(t)^2g_{\mathbb{S}^n})$$ with center $o=\{t=0\}$ and the area of the geodesic sphere is $|\partial B(o,t)|=\omega_n \varphi(t)^n,$ we only construct a smooth, strictly increasing function $g(t)=\ln |\partial B(o,t)|$ as follows.
        \\ Define a sequence $\{x_n\}$ by
         $$
        x_0=1,\ \ x_1=2,\ \ x_{n+1}=x_n+1+e^{x_n+1},\ \forall n\geq 1.
        $$
        Define $g$ on each interval $[x_n,x_{n+1}]$ in two parts.
        \begin{itemize}
          \item On $[x_n+1,x_{n+1}],$ set $g(t)=x_n+1.$
          \item On $[x_n,x_n+1],$ let $g$ be smooth and strictly increasing, and require that all derivatives at $x_n$ and $x_n+1$ vanish.
        \end{itemize}
        Thus $(M,g)$ is a smooth manifold with $|\partial B(o,t)|$ increasing.
        \par On the one hand,
        $$ |B(o,x_n+2)|\geq\int_{x_n+1}^{x_n+2} |\partial B(o,t)|\,dt=e^{x_n+1}
        $$ and hence
        $$
        \mathcal{V}(M)=\limsup_{R\rightarrow\infty}\frac{\ln |B(o,R)|}{R}\geq \limsup_{n\rightarrow\infty}\frac{\ln |B(o,x_n+2)|}{x_n+2}=1.
        $$
        One can even check that $\mathcal{V}(M)=1.$ On the other hand, for any $p>2,$
        \begin{align*}
            {\rm Cap}_p(B(o,1))&= \Big(\int_{1}^{\infty} \big|\partial B(o,t)|^{\frac{1}{1-p}}\,dt\Big)^{1-p}
            \\ &\leq\Big(\sum_{n=1}^\infty\int_{x_n+1}^{x_{n+1}} e^{\frac{x_n+1}{1-p}}\,dt \Big)^{1-p}
            \\ &=\Big(\sum_{n=1}^\infty  e^{(1+x_n)(1+\frac{1}{1-p})} \Big)^{1-p}
            \\ &=0.
        \end{align*}
        As a consequence, $\mathcal{C}(B(o,1))=0$ and $\mathcal{C}(B(o,1))<\mathcal{V}(M).$
     \end{example}

     \begin{example}\label{ex3.2}
       Let $(M,g)$ be a rotationally symmetric manifold with $g= dt^2+\varphi(t)^2g_{\mathbb{S}^n}$ and $|\partial B(o,t)| = e^{t h(t)}$, where $h(t) = 4 + \sin\ln t$ for $t\geq 1.$ We modify $\varphi(t)$ near $t=0$ so that the metric extends smoothly. Then
       \begin{enumerate}
\item $|\partial B(o,t)|$ is strictly increasing in $t.$
\item The limit $\displaystyle\lim_{p\to\infty} p\Big({\rm Cap}_p(B(o,1))\Big)^{\frac1p}$ does not exist.
\item $\mathcal{C}(B(o,1))>\Lambda(M).$
\end{enumerate}
\begin{proof}
\begin{enumerate}
  \item Since $$
  \frac{d}{dt}|\partial B(o,t)|= e^{t h(t)}(4+\sin\ln t+\cos\ln t)>0,
  $$ We get that $|\partial B(o,t)|$ is a strictly increasing function.
  \item  Since
  \begin{equation}\label{eq4.1}
  \displaystyle\lim_{p\to\infty} p\Big({\rm Cap}_p(B(o,1))\Big)^{\frac1p}=\displaystyle\lim_{p\to\infty} (p-1)\Big({\rm Cap}_p(B(o,1))\Big)^{\frac1{p-1}},
  \end{equation} we only consider the limit on the right side. Let
  $$x = \frac{1}{p-1}\rightarrow 0^+,\ \ \&\ \ \theta = x t.$$
  Then
\begin{align*}
(p-1)\Big({\rm Cap}_p(B(o,1))\Big)^{\frac1{p-1}} &= (p-1)\left( \int_1^\infty e^{t \cdot h(t)/(1-p)}\,dt\right)^{-1}
\\ &=\left( \int_1^\infty xe^{-x t \cdot h(t)}\,dt\right)^{-1}
\\&= \left( \int_x^\infty e^{-\theta h(\theta/x)}\,d\theta \right)^{-1}
\end{align*}
Hence we only need to check that whether the limit
$$
\lim_{x\to0^+}\int_x^\infty e^{-\theta h(\frac{\theta}{x})}\,d\theta =\lim_{x\to0} \int_x^\infty e^{-\theta\left(4+\sin\ln\frac{\theta}{x}\right)}\,d\theta
$$ exists or not.
Let
$$
x_k = e^{-2k\pi}, \qquad x_k' = e^{-2k\pi + \pi}, \qquad k\in\mathbb{N}.
$$
Clearly $x_k, x_k' \to 0$ as $k\to\infty$  and
$$\begin{cases}
\lim\limits_{k\to\infty}\int_{x_k}^\infty e^{-\theta h(\frac{\theta}{x_k})}\,d\theta = \int_0^\infty e^{-\theta(4+\sin\ln\theta)}\,d\theta \equiv I_1
\\ \lim\limits_{k\to\infty}\int_{x'_k}^\infty e^{-\theta h(\frac{\theta}{x'_k})}\,d\theta = \int_0^\infty e^{-\theta(4-\sin\ln\theta)}\,d\theta \equiv I_2.
\end{cases}
$$
In the following, we will show that
$$
I_2-I_1=2\int_0^\infty e^{-4\theta}\sinh(\theta\sin\ln\theta)\,d\theta<0
$$ and it would imply that the limit of \eqref{eq3.1} does not exist.
\par
Using $\sinh y = \sum\limits_{k=0}^\infty \frac{y^{2k+1}}{(2k+1)!}$ and integrating termwise,
$$
I_2-I_1 = 2\sum_{k=0}^\infty \frac{\int_0^\infty e^{-4\theta} \theta^{2k+1} (\sin \ln \theta)^{2k+1}\,d\theta}{(2k+1)!}.
$$ On the one hand,
\begin{align*}
\sum_{k=1}^\infty\frac{\int_0^\infty e^{-4\theta} \theta^{2k+1} (\sin \ln \theta)^{2k+1}\,d\theta}{(2k+1)!}
& \le \sum_{k=1}^\infty \frac{\int_0^\infty e^{-4\theta} \theta^{2k+1} d\theta }{(2k+1)!}
\\ &=\sum_{k=1}^\infty \frac{1}{4^{2k+2}}=\frac{1}{240}.
\end{align*}
On the other hand,
$$
\int_0^\infty e^{-4\theta} \theta \sin(\ln \theta)\,d\theta=\int_{-\infty}^{\infty} e^{2t - 4e^t} \sin t \, dt.
$$
Split the integral:
$$
 \int_{-\infty}^{-\pi} + \int_{-\pi}^{\pi} + \int_{\pi}^{\infty} =: A + B + C
$$ where
$$
A \le \int_{-\infty}^{-\pi} e^{2t - 4e^t}\,dt
      = \int_0^{e^{-\pi}} u e^{-4u}\,du
      = \frac{1-e^{-4e^{-\pi}}(4e^{-\pi}+1)}{16}<0.0009
$$
and
$$
C \le \int_{\pi}^{\infty} e^{2t - 4e^t}\,dt
      = \int_{e^{\pi}}^{\infty} u e^{-4u}\,du
      =\frac{e^{-4e^\pi}(4e^\pi+1)}{16}<10^{-38}.
$$
On $[0,\pi]$, $e^t \ge 1+t$ and convexity gives
 $e^{-t} \le  1 - a t$ with $a = (1-e^{-\pi})/\pi > 0.$ Then
\begin{align*}
  B &=   \int_{-\pi}^{\pi} e^{2t - 4e^t} \sin t \, dt
 \\& =  \int_0^{\pi} \bigl( e^{2t - 4e^t} - e^{-2t - 4e^{-t}} \bigr) \sin t \, dt
 \\ &\leq  \int_0^{\pi} e^{-4}\bigl(e^{-2t} - e^{-(2-4a)t}\bigr) \sin t \, dt
 \\&=e^{-4}\Big(\frac{1+e^{-2\pi}}{5} - \frac{1+e^{-(2-4a)\pi}}{(2-4a)^2+1}\Big)
 \\&< -0.008.
\end{align*}
As a consequence,
$$I_2-I_1 \leq A+B+C+\frac{1}{240}<0
$$ and we finish the proof.
  \item Recall the property of Maz'ya constant implies that
  $$ {\rm Cap}_p(\Omega)\geq|\Omega|\lambda_{1,p}(M).$$
  where $\Omega=B(o,1)$ as above. Then
  \begin{align*}
  \mathcal{C}(\Omega)&=\limsup\limits_{p\rightarrow\infty}p\cdot\Big({\mathrm{Cap}_p(\Omega)}\Big)^{\frac 1p}
  \\ & >\liminf\limits_{p\rightarrow\infty}p\cdot\Big({\mathrm{Cap}_p(\Omega)}\Big)^{\frac 1p}
  \\ &\geq\liminf\limits_{p\rightarrow\infty}p\cdot \Big(|\Omega|\lambda_{1,p}(M)\Big)^{\frac 1p}
  \\ &=\Lambda(M).
  \end{align*}
\end{enumerate}
\end{proof}
     \end{example}

\begin{example}[Product structure]
\label{ex:product-structure}
Let
\[
        M=M_1\times\cdots\times M_m
\]
be a Riemannian product of $m$ rank-one Hadamard manifolds, each admitting
a compact quotient.  Then $\operatorname{rank}M=m$, and the radial
second-order quantity
\[
        \Theta^{\operatorname{rad}}_p(R)
        :=
        \frac{p-1}{
        \displaystyle
        \int_R^\infty |\partial B(o,t)|^{\frac{1}{1-p}}\,dt
        }
\]
satisfies
\[
        \frac{\Theta^{\operatorname{rad}}_p(R)}{\mathcal V(M)}
        =1+\frac{m-1}{2} \frac{\ln p}{p}+O\!\left(\frac1p\right).
\]
Hence
\[
        \lim_{p\to\infty}
        \frac{
        p\left(\Theta^{\operatorname{rad}}_p(R)-\mathcal V(M)\right)
        }{
        \mathcal V(M)\ln p
        }
        =
        \frac{m-1}{2}.
\]
Thus the radial second-order exponent detects the number of rank-one
product factors.  If the true $p$-capacity is uniformly comparable to the
radial capacity as $p\to\infty$, the same limit holds for the true
capacity.
\end{example}

\section{Appendix: The infinity capacity of a condenser}\label{Appendixsection}
 Let $(K,O)$ be a condenser in $(M,g),$ we define the infinity capacity by
\begin{equation*}
\begin{aligned}
 {\rm Cap}_{\infty}(K,O)=&\lim\limits_{p\rightarrow\infty} \Big({\rm Cap}_{p}(K,O)\Big)^{\frac 1p}\\
 =&\inf\left\{\|\nabla u\|_{L^\infty(O\setminus K)} : u\in W^{1,\infty}_0(O),\; u\ge 1\text{ on }K\right\}.
\end{aligned}
\end{equation*}
It is shown in \cite{jiang2018relative} that the limit always exists and in Euclidean space $\mathbb{R}^n,$ there holds
$$
{\rm Cap}_{\infty}(K,O)=\frac{1}{{\rm dist}(\partial K,\partial O)}.
$$
Following the ideal of \cite{jiang2018relative}, we have
\begin{theorem}\label{A1}
  Assume that $(M,g)$ is a complete Riemannian manifold of dimension $n+1$. Let $(K,O)$ be a condenser with smooth boundaries. Then
  $$
  {\rm Cap}_{\infty}(K,O)=\frac 1{d_g(\partial K,\partial O)},
  $$
  where $d_g(\partial K,\partial O)=\inf\{d_g(x,y):x\in\partial K,\;y\in\partial O\}$ is the Riemannian distance between $\partial K$ and $\partial O$.
\end{theorem}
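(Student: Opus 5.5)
We prove Theorem~\ref{A1} by establishing the two inequalities ${\rm Cap}_\infty(K,O)\le 1/d$ and ${\rm Cap}_\infty(K,O)\ge 1/d$ separately, where $d=d_g(\partial K,\partial O)>0$ (positive because $K$ is compact in the open set $O$).

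For the upper bound we build an explicit distance-type competitor. Put
\[
u(x)=\min\Big\{1,\ \frac{d_g(x,M\setminus O)}{d}\Big\}.
\]
This function is Lipschitz with constant $1/d$, since the distance function is $1$-Lipschitz. It vanishes on $\partial O$. It equals $1$ on $K$, because every $x\in K$ satisfies $d_g(x,M\setminus O)\ge d$: a minimizing path from $x$ to $M\setminus O$ must cross $\partial K$ and then $\partial O$. Two adjustments are needed. First, if $O$ is unbounded, $u$ must be cut off; we assume $O$ is bounded, as the condenser setting implies. Second, to get a function in ${\rm Lip}_c(O)$ we replace $d$ by $d+\varepsilon$, which gives compact support inside $O$. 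Then
\[
{\rm Cap}_p(K,O)\le \int_{O\setminus K}|\nabla u|^p\le (d+\varepsilon)^{-p}|O\setminus K|.
\]
Taking $p$-th roots, letting $p\to\infty$, and then $\varepsilon\to0$ gives $\limsup_p {\rm Cap}_p^{1/p}\le 1/d$.

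For the lower bound we avoid the full infinity-harmonic machinery and use a direct estimate along curves. Fix an admissible $u$ with $u\ge1$ on $K$ and $u=0$ near $\partial O$. Choose points $x_0\in\partial K$ and $y_0\in\partial O$ that realize $d$; they exist by compactness. Now take a tubular family of curves. Let $\gamma$ be a minimizing geodesic from $x_0$ to $y_0$ of length $d$. By smoothness of the boundaries and continuity of $d_g$, for small $\delta>0$ there is a set $E$ of positive $n$-dimensional measure near $x_0$ on $\partial K$, and for each $z\in E$ a curve $\gamma_z$ from $z$ to $\partial O$ of length at most $d+\delta$. These curves sweep out a region $U$ with Jacobian bounded above and below; one can take them normal-exponential from $\partial K$ or geodesic from nearby points. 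Along each curve,
\[
1\le\int_{\gamma_z}|\nabla u|\le (d+\delta)^{1-1/p}\Big(\int_{\gamma_z}|\nabla u|^p\Big)^{1/p}.
\]
Integrating over $z\in E$ and using the Jacobian bound gives
\[
\int_{O\setminus K}|\nabla u|^p\ge c_\delta\,(d+\delta)^{1-p},
\]
with $c_\delta>0$ independent of $p$. Hence $\liminf_p{\rm Cap}_p^{1/p}\ge 1/(d+\delta)$, and letting $\delta\to0$ gives the reverse inequality. Combining both bounds shows that the limit exists and equals $1/d$. The same two bounds also identify the $W^{1,\infty}$ infimum formula stated in the definition: the competitor above has $\|\nabla u\|_\infty\le 1/(d+\varepsilon)^{-1}$ in the sense of Lipschitz constant $1/(d+\varepsilon)$, and the curve argument with $p=\infty$ shows that every admissible $u$ has $\|\nabla u\|_\infty\ge 1/d$.

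The main obstacle is the construction in the lower bound. We need a family of nearly minimizing curves parametrized by a set of positive measure, with a uniform Jacobian, so that the fiberwise estimate integrates to a volume estimate with a $p$-independent constant $c_\delta$. We will try the map $(z,s)\mapsto \exp_z(s\,\nu(z))$, where $\nu$ is the normal along $\partial K$ and $z$ ranges over a small disc around $x_0$. This map is a diffeomorphism near $s\in[0,d]$ only if $\gamma$ carries no focal points. If focal points occur, we fall back on using curves from nearby points that follow $\gamma$ for most of their length, so that for small $\delta$ we still get positive-measure families with a positive lower Jacobian bound.
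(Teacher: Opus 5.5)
Your argument is correct in outline but takes a different route from the paper. The paper's proof is only a sketch that defers to Jiang--Xiao--Yang. A compactness argument (Morrey embedding plus weak lower semicontinuity as $p\to\infty$) identifies $\lim_p{\rm Cap}_p(K,O)^{1/p}$ with the $L^\infty$ problem $\inf\|\nabla u\|_{L^\infty(O\setminus K)}$. The value $1/d$ of that infimum then comes from Jensen's identity $\Lip(u,V)=\Lip(u,\partial V)$ for the infinity-harmonic extremal. On a manifold this requires the chain infinity-harmonic $\Rightarrow$ AGME $\Rightarrow$ SAMLE $\Rightarrow$ AMLE of Dragoni--Manfredi--Vittone. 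You bypass all of this. An explicit distance competitor gives the upper bound, and a tube of nearly minimizing curves with $p$-independent constants gives the lower bound. So existence of the limit and its value come out together, with no viscosity theory, and the $W^{1,\infty}$ formula is a byproduct. What the paper's route adds is the limiting extremal itself, which the theorem does not need.

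Two steps need repair.

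\textbf{Upper bound.} Replacing $d$ by $d+\varepsilon$ goes the wrong way. The function $\min\{1,d_g(x,M\setminus O)/(d+\varepsilon)\}$ is still positive on all of $O$, so it is not compactly supported. On $K$ it is only $\ge d/(d+\varepsilon)$, so it is not admissible. Shift instead: take $u_\varepsilon=\min\{1,(d_g(x,M\setminus O)-\varepsilon)_+/(d-\varepsilon)\}$. It equals $1$ on $K$, is supported in the compact set $\{d_g(\cdot,M\setminus O)\ge\varepsilon\}\subset O$, and has Lipschitz constant $1/(d-\varepsilon)$. This gives $\limsup_p{\rm Cap}_p^{1/p}\le 1/(d-\varepsilon)$.

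\textbf{Lower bound.} What you need is injectivity of the sweep map and a lower bound on its Jacobian relative to the speed of the curves; an upper bound on the Jacobian is not needed. The focal-point worry can be removed cleanly.
\begin{itemize}
\item By first variation, $\dot\gamma(0)$ and $\dot\gamma(d)$ are the outward normals of $K$ and $O$. So for small $\eta$ the extended geodesic arc $\gamma|_{[-\eta,d+\eta]}$ is embedded, starts in ${\rm int}\,K$, and ends in $M\setminus\overline O$.
\item Use Fermi coordinates $\Phi(s,v)=\exp_{\gamma(s)}(\sum_i v_iE_i(s))$ on a thin tube. For $|v|<\rho$, the curves $s\mapsto\Phi(s,v)$ have the same endpoint properties, length $\le d+3\eta$, and Jacobian close to $1$.
\item Apply your H\"older step to $\min\{u,1\}$. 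It has no larger energy on $O\setminus K$ and zero gradient a.e.\ on $K$, so the parts of the tube inside $K$ cost nothing.
\end{itemize}

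Your first plan also works. A geodesic realizing the distance between two hypersurfaces has no focal point of $\partial K$ on $(0,d]$. A focal point at $s=d$ would come with a $\partial K$-Jacobi field $J$ with $J(d)=0$. Perturbing it by $\epsilon W$, with $W(0)=0$ and $W(d)=-J'(d)\in T_{y_0}\partial O$, would make the second variation negative. The Fermi tube simply makes this argument unnecessary.
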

\begin{proof}
The proof follows the strategy of Jiang--Xiao--Yang \cite{jiang2018relative}. The only points
which require verification are the compactness argument and Jensen's
Lipschitz identity in the Riemannian setting. The former follows from
the Sobolev--Morrey embedding and weak lower semicontinuity on bounded
smooth Riemannian domains. The latter follows from the equivalence
between infinity harmonic functions and AMLEs on Riemannian manifolds.
\end{proof}

\begin{theorem}[Riemannian version of Corollary~1.13 in \cite{jensen1993uniqueness}]\label{A1}
    Let $(M,g)$ be a smooth Riemannian manifold, $\Omega\subset M$ a bounded connected domain, and $u:\overline{\Omega}\to\R$ an infinity harmonic function (viscosity solution of $\Delta_{\infty,g}u=0$ in $\Omega$).
    Then for every relatively compact subdomain $V\Subset\Omega$,
    $$
        \Lip(u,V)=\Lip(u,\partial V),
    $$
    where $\Lip(u,\cdot)$ denotes the Lipschitz constant.
\end{theorem}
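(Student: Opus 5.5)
The plan is to transport Jensen's comparison-with-cones argument to the Riemannian setting. In place of Euclidean cones I use the distance cones $C(x)=a+b\,d_g(x,z)$ and work in the viscosity sense. The first step is to fix the notion of solution: $u$ is infinity harmonic in $\Omega$ if it is a viscosity solution of $\Delta_{\infty,g}u=\langle \nabla^2 u\,\nabla u,\nabla u\rangle_g=0$, with test functions touching from above or below in normal coordinates. The target is the classical chain of equivalences
\[
\text{infinity harmonic} \;\Longleftrightarrow\; \text{comparison with distance cones} \;\Longleftrightarrow\; \text{AMLE},
\]
after which $\Lip(u,V)=\Lip(u,\partial V)$ is essentially the definition of an absolutely minimizing Lipschitz extension. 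Here the Lipschitz constant is measured with respect to $d_g$; for $\Lip(u,V)$ I use either the intrinsic or the induced distance, checking that the two agree locally and that the argument goes through.

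The key step is to show that an infinity harmonic $u$ enjoys comparison with cones from above. Concretely, if $V\Subset\Omega$, $z\notin V$, and $u\le a+b\,d_g(\cdot,z)$ on $\partial V$, then the same inequality holds in $V$; the analogous statement from below also holds.

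1. When $b>0$, on $M\setminus(\{z\}\cup\mathrm{Cut}(z))$ the function $r=d_g(\cdot,z)$ is smooth with $|\nabla r|=1$. Differentiating $|\nabla r|^2=1$ gives $\nabla^2 r(\nabla r,\cdot)=0$, so $\Delta_{\infty,g}(a+br)=0$ classically there.
2. The obstacle is the cut locus, where $r$ fails to be smooth. Since $r$ is a minimum of smooth functions near cut points, it is semiconcave, so no smooth function touches it from below there. I expect this to make $a+br$ a viscosity \emph{supersolution} in the strong sense needed.
3. To get the strict inequalities required by the comparison principle, I perturb the cone to $a+b\,r-\varepsilon r^2$ or to $a+\phi(r)$ with $\phi$ strictly concave; these are strict supersolutions away from $z$.
4. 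A touching argument at an interior maximum of $u-C$ then yields a contradiction.

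This is the step I expect to be the main obstacle. If handling the cut locus directly fails, my fallback is to work only on small balls $V$ inside convexity radii, where $r$ is smooth on $\overline V$ for $z$ near $V$, and then globalize by the standard covering/local-to-global property of the Lipschitz constant along geodesics.

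From comparison with cones I derive $\Lip(u,V)\le \Lip(u,\partial V)$ as follows. Let $L=\Lip(u,\partial V)$. For $y\in\partial V$ and all $x\in\partial V$ we have $u(x)\le u(y)+L\,d_g(x,y)$, so comparison with the cone centered at $y$, taken on $V$ (or on $V\setminus\{y\}$ with $y$ on the boundary), gives $u(x)\le u(y)+L\,d_g(x,y)$ for every $x\in V$. Applying the same argument with the roles reversed, now with the cone centered at interior points and on $V\setminus\{x\}$, extends the bound to all pairs in $\overline V$. One caveat must be checked here: $d_g$ is the distance of $M$ rather than the intrinsic distance of $V$. Comparison with cones handles this naturally because the cones use $d_g$.

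The reverse inequality $\Lip(u,\partial V)\le\Lip(u,V)$ is trivial by continuity of $u$ up to $\overline V$. Combining the two inequalities gives the identity. Connectedness of $\Omega$ enters only in guaranteeing that $\overline V$ sits in a region where the viscosity comparison applies.
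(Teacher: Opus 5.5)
Your route differs from the paper's and can be made to work, but the step you flag as the crux is not closed by what you wrote.

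\textbf{The gap at the cut locus.} In Step~4 the contradiction is obtained at an interior maximum $x_0$ of $u-\tilde C$, where $\tilde C=a+\phi(r)$. There $\tilde C$ touches $u$ from \emph{above}, so you need a $C^2$ function to feed into the \emph{subsolution} test for $u$. If $x_0\in\mathrm{Cut}(z)$, then $\tilde C$ is not $C^2$ at $x_0$. The facts collected in Step~2 (no smooth function touches $r$ from below; $\tilde C$ is a viscosity supersolution) do not help here. Comparing a nonsmooth viscosity subsolution with a nonsmooth viscosity supersolution is a genuine comparison principle, not a touching argument. A generic upper paraboloid supplied by semiconcavity does not help either, because its $\infty$-Laplacian has no sign.

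\textbf{The missing ingredient: Calabi's upper barrier.} Take a minimizing geodesic $\gamma$ from $z$ to $x_0=\gamma(t_0)$, a small $\delta>0$, and set $\tilde r=\delta+d_g(\cdot,\gamma(\delta))$.
Then $\tilde r\ge r$, with equality at $x_0$. Moreover $x_0\notin\mathrm{Cut}(\gamma(\delta))$: a second minimizing geodesic, or a conjugate point, from $\gamma(\delta)$ to $x_0$ would contradict the minimality of $\gamma$ on $[0,t_0]$.
Hence $\tilde r$ is smooth near $x_0$, with $|\nabla\tilde r|=1$ and $\Delta_{\infty,g}\tilde r=0$.
Since $\phi$ is increasing on the relevant range, $a+\phi(\tilde r)$ touches $u$ from above at $x_0$.
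But $\Delta_{\infty,g}\bigl(a+\phi(\tilde r)\bigr)(x_0)=\phi'(t_0)^2\phi''(t_0)<0$, which contradicts the subsolution property.

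\textbf{Consequences for the rest of your argument.} With this barrier, comparison with $d_g$-cones holds on every $V\Subset\Omega$. Your derivation of $\Lip(u,V)\le\Lip(u,\partial V)$ then goes through as written, provided both sides are measured in the ambient distance $d_g$ throughout. If you use the intrinsic distance of $V$ on one side, the ``trivial'' reverse inequality is no longer trivial. Discard the fallback: cone comparison on small convex balls does not globalize, because the $d_g$-Lipschitz constant on a nonconvex $V$ is not determined by local data. Note also that the existence of minimizing geodesics uses completeness of $M$, which holds in the appendix setting.

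\textbf{Comparison with the paper.} The paper proves nothing directly here; it chains results of Dragoni--Manfredi--Vittone:
on Riemannian manifolds, infinity harmonic functions are AGMEs;
the identity $\widehat{\Lip}(u,U)=\|\nabla u\|_{L^\infty(U)}$ makes AGMEs strong AMLEs;
and the weak Fubini property together with Juutinen--Shanmugalingam gives strong AMLE $\Leftrightarrow$ AMLE.
It then reads $\Lip(u,V)=\Lip(u,\partial V)$ off the definition of AMLE. That route is short and sits inside a general metric-measure and sub-Riemannian framework, but all the geometry is hidden in the cited equivalences. Your route, once repaired at the cut locus, is self-contained and uses only standard Riemannian facts. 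It also yields the stronger comparison-with-cones property directly in the distance $d_g$ in which the statement is formulated.
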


\begin{proof}
The proof combines a chain of equivalences established in the literature.

First, by Corollary~4.5 of \cite{dragoni2013weak}, on a Riemannian manifold the infinity harmonic functions coincide with absolutely gradient minimizing extensions (AGME). Hence $u$ is an AGME.
Second, Theorem~4.7 in \cite{dragoni2013weak} states that for any Lipschitz function $u$, $\widehat{\Lip}(u,U)=\|\nabla u\|_{L^\infty(U)}$. Since $u$ is an AGME, it minimizes $\|\nabla u\|_{L^\infty(U)}$ among all functions with the same boundary values on $U$; therefore it also minimizes $\widehat{\Lip}(u,U)$, which is exactly the definition of a strong absolutely minimizing Lipschitz extension (SAMLE). Thus $u$ is a SAMLE.
Third, the weak Fubini property holds on any Riemannian manifold (Theorem~3.1 of \cite{dragoni2013weak}). Under this property, Juutinen and Shanmugalingam proved that in a length space the notions of AMLE and SAMLE are equivalent; see Theorem~2.9 in \cite{dragoni2013weak} or Corollary~3.5 therein. Consequently, $u$ is an AMLE.
Finally, the definition of an AMLE (see Definition~2.1 in \cite{juutinen2006equivalence} or Definition~2.1 in \cite{dragoni2013weak}) requires precisely that for every subdomain $V\subset\Omega$, $\Lip(u,V)=\Lip(u,\partial V)$. This completes the proof.
\end{proof}

\noindent{Xiaoshang Jin}\\
  School of mathematics and statistics, Huazhong University of science and technology, 430074, Wuhan, P.R. China
 \\Email address: jinxs@hust.edu.cn
 \\~\\
\noindent{Jiabin Yin}\\
 School of Mathematics and Statistics, Xinyang Normal University, 464000, Xinyang, P.R. China
	\\Email address: jiabinyin@126.com

\begin{thebibliography}{10}

\bibitem{adams2025full}
David~R Adams and Jie Xiao.
\newblock Full capacity--volumetry of sharp exp-integrability law.
\newblock {\em Journal of the London Mathematical Society}, 112(2):e70255,
  2025.

\bibitem{ballmann1985structure}
Ballmann, Werner and Brin, Misha and Eberlein, Patrick
\newblock Structure of manifolds of nonpositive curvature I.
\newblock {\em Annals of Mathematics}, 122(1):171-203, 1985.

\bibitem{bray2008capacity}
Hubert Bray and Pengzi Miao.
\newblock On the capacity of surfaces in manifolds with nonnegative scalar
  curvature.
\newblock {\em Inventiones mathematicae}, 172(3):459--475, 2008.

\bibitem{dragoni2013weak}
Federica Dragoni, Juan Manfredi, and Davide Vittone.
\newblock Weak fubini property and infinity harmonic functions in riemannian
  and sub-riemannian manifolds.
\newblock {\em Transactions of the American Mathematical Society},
  365(2):837--859, 2013.

\bibitem{F1982}
A Freire and R. Ma{\~n}\'{e}.
\newblock On the entropy of the geodesic flow in manifolds without conjugate points.
 \newblock {\em Inventiones Mathematicae}, 69:375-392, 1982

\bibitem{fogagnolo2022minimising}
Mattia Fogagnolo and Lorenzo Mazzieri.
\newblock Minimising hulls, p-capacity and isoperimetric inequality on complete
  riemannian manifolds.
\newblock {\em Journal of Functional Analysis}, 283(9):109638, 2022.

\bibitem{grigor1999isoperimetric}
Alexander Grigor’yan.
\newblock Isoperimetric inequalities and capacities on riemannian manifolds.
\newblock In {\em The Maz’ya Anniversary Collection: Volume 1: On
  Maz’ya’s work in functional analysis, partial differential equations and
  applications}, pages 139--153. Springer, 1999.

\bibitem{hijazi2020cheeger}
Oussama Hijazi, Sebasti{\'a}n Montiel, and Simon Raulot.
\newblock The cheeger constant of an asymptotically locally hyperbolic manifold
  and the yamabe type of its conformal infinity.
\newblock {\em Communications in Mathematical Physics}, 374(2):873--890, 2020.

\bibitem{jensen1993uniqueness}
Robert Jensen.
\newblock Uniqueness of lipschitz extensions: minimizing the sup norm of the
  gradient.
\newblock {\em Archive for Rational Mechanics and Analysis}, 123(1):51--74,
  1993.

\bibitem{jiang2018relative}
Renjin Jiang, Jie Xiao, and Dachun Yang.
\newblock Relative $\infty-$capacity and its affinization.
\newblock {\em Advances in Calculus of Variations}, 11(1):95--110, 2018.

\bibitem{jin2025lower}
Xiaoshang Jin.
\newblock {Lower bound for the first eigenvalue of $p-$Laplacian and
  applications in asymptotically hyperbolic Einstein manifolds}.
\newblock {\em Mathematical Research Letters}, 32(6):1947--1970.

\bibitem{jin2025relative}
Xiaoshang Jin.
\newblock The relative volume function and the capacity of sphere on
  asymptotically hyperbolic manifolds.
\newblock {\em Acta Mathematica Scientia}, 45(3):755--770, 2025.

\bibitem{jin2026sharp}
Xiaoshang Jin, Yao Wan, and Jie Xiao.
\newblock Sharp estimates for capacities-to-quermassintegrals within hyperbolic
  space.
\newblock {\em Submitted}, 2026.

\bibitem{jin2025sharply}
Xiaoshang Jin and Jie Xiao.
\newblock Sharply estimating hyperbolic capacities.
\newblock {\em arXiv preprint arXiv:2502.13651}, 2025.

\bibitem{juutinen1999eigenvalue}
Petri Juutinen, Peter Lindqvist, and Juan~J Manfredi.
\newblock The $\infty$-eigenvalue problem.
\newblock {\em Archive for rational mechanics and analysis}, 148(2):89--105,
  1999.

\bibitem{juutinen2006equivalence}
Petri Juutinen and Nageswari Shanmugalingam.
\newblock Equivalence of amle, strong amle, and comparison with cones in metric
  measure spaces.
\newblock {\em Mathematische Nachrichten}, 279(9-10):1083--1098, 2006.

\bibitem{Knieper1997} G. Knieper, \newblock{On the asymptotic geometry of nonpositively curved manifolds}.
 \newblock {\em GAFA Geom. funct. anal.} \textbf{7}, 755--782,  1997.


\bibitem{kruglikov1987capacity}
VI~Kruglikov.
\newblock Capacity of condensers and spatial mappings quasiconformal in the
  mean.
\newblock {\em Mathematics of the USSR-Sbornik}, 58(1):185--205, 1987.

\bibitem{lee1995spectrum}
John~M Lee.
\newblock The spectrum of an asymptotically hyperbolic einstein manifold.
\newblock {\em Communications in Analysis and Geometry}, 3(2):253--271, 1995.

\bibitem{li2025sharp}
Haizhong Li, Ruixuan Li, and Changwei Xiong.
\newblock Sharp upper bounds for the capacity in the hyperbolic and euclidean
  spaces.
\newblock {\em Advances in Nonlinear Analysis}, 14(1):20250068, 2025.

\bibitem{LW2010}
François Ledrappier, Xiaodong Wang.
\newblock An integral formula for the volume entropy with applications to rigidity.
\newblock {\em J. Differential Geom}. 85 (2010), no. 3, 461-477.

\bibitem{Ma1979}
A Manning.
\newblock Topological entropy for geodesic flows.
\newblock {\em  Annals of Mathematics}, 110(3):567–573, 1979.


\bibitem{mazurowski2023monotone}
Liam Mazurowski and Xuan Yao.
\newblock Monotone quantities for $ p $-harmonic functions and the sharp $ p
  $-penrose inequality.
\newblock {\em arXiv preprint arXiv:2305.19784}, 2023.

\bibitem{maz2013sobolev}
Vladimir Maz'ya.
\newblock {\em Sobolev spaces}.
\newblock Springer, 2013.

\bibitem{miao2024implications}
Pengzi Miao.
\newblock Implications of some mass-capacity inequalities.
\newblock {\em The Journal of Geometric Analysis}, 34(8):241, 2024.

\bibitem{miao2025mass}
Pengzi Miao.
\newblock Mass, capacitary functions, and the mass-to-capacity ratio: P. miao.
\newblock {\em Peking Mathematical Journal}, 8(2):351--404, 2025.

\bibitem{oronzio2025adm}
Francesca Oronzio.
\newblock Adm mass, area and capacity in asymptotically flat 3-manifolds with
  nonnegative scalar curvature.
\newblock {\em Communications in Contemporary Mathematics}, 27(09):2550011,
  2025.

\bibitem{oronzio2025area}
Francesca Oronzio.
\newblock Area, volume, and capacity in non-compact 3-manifolds with
  non-negative scalar curvature.
\newblock {\em International Mathematics Research Notices}, 2025(18):rnaf282,
  2025.


\bibitem{peigne2019entropy}
M. Peign\'e and A. Sambusetti.
\newblock Entropy rigidity of negatively curved manifolds of finite volume.
\newblock {\em Mathematische Zeitschrift}, 293:609--627, 2019.

\bibitem{polya1951isoperimetric}
G~Polya and G~Szeg{\"o}.
\newblock Isoperimetric inequalities in mathematical physics.(am-27), 1951.

\bibitem{xia2024new}
Chao Xia, Jiabin Yin, and Xingjian Zhou.
\newblock New monotonicity for p-capacitary functions in 3-manifolds with
  nonnegative scalar curvature.
\newblock {\em Advances in Mathematics}, 440:109526, 2024.

\bibitem{xiao2016p}
Jie Xiao.
\newblock The $p$-harmonic capacity of an asymptotically flat $3$-manifold with
  non-negative scalar curvature.
\newblock In {\em Annales Henri Poincar{\'e}}, volume~17, pages 2265--2283.
  Springer, 2016.

\bibitem{xiao2017p}
Jie Xiao.
\newblock $P$-capacity vs surface-area.
\newblock {\em Advances in Mathematics}, 308:1318--1336, 2017.

\bibitem{yin2026sharp}
Jiabin Yin and Xingjian Zhou.
\newblock Sharp upper bounds of $p$-capacity in convex cone and asymptotically
  flat half-space.
\newblock {\em Letters in Mathematical Physics}, 116(2):34, 2026.

\end{thebibliography}
\end{document}